\newtheorem{definition}{Definition}
\newtheorem{theorem}{Theorem}
\newtheorem{example}{Example}
\newtheorem{remark}{Remark}
\crefname{table}{Table}{Tables}
\crefname{figure}{Fig.}{Fig.}
\crefname{equation}{}{}
\crefname{definition}{Definition}{Definitions}
\crefname{algorithm}{Algorithm}{Algorithms}
\def\BibTeX{{\rm B\kern-.05em{\sc i\kern-.025em b}\kern-.08em
    T\kern-.1667em\lower.7ex\hbox{E}\kern-.125emX}}
\begin{document}

\title{Solving Linear Systems on  a GPU with Hierarchically Off-Diagonal Low-Rank Approximations
}

\author{\IEEEauthorblockN{Chao Chen}
\IEEEauthorblockA{\textit{Oden Institute for Computational Engineering and Sciences} \\
\textit{University of Texas at Austin}\\
Austin, United States \\
chenchao.nk@gmail.com}
\and
\IEEEauthorblockN{Per-Gunnar Martinsson}
\IEEEauthorblockA{\textit{Department of Mathematics} \\
\textit{Oden Institute for Computational Engineering and Sciences} \\
\textit{University of Texas at Austin}\\
Austin, United States  \\
pgm@oden.utexas.edu}
}

\maketitle

\newcommand{\A}{\mathsf{A}}
\newcommand{\x}{\mathsf{x}}

\newcommand{\rchol}{\texttt{rchol}}
\newcommand{\ichol}{\texttt{ichol}}

\newcommand{\hlu}{$\mathcal{H}$-LU }
\newcommand{\hodlrlib}{{\textsc{HODLRlib}}}

\newcommand{\red}[1]{\textcolor{red}{#1}}

\newcommand{\N}{ {\mathcal{N}} }
\newcommand{\G}{ {\mathcal{G}} }
\newcommand{\bigO}{ {\mathcal{O}} }

\renewcommand{\algorithmiccomment}[1]{#1}

\renewcommand{\algorithmicrequire}{\textbf{Input:}}
\renewcommand{\algorithmicensure}{\textbf{Output:}}

\newcolumntype{H}{>{\setbox0=\hbox\bgroup}c<{\egroup}@{}}
	
\definecolor{Gray}{gray}{0.9}

\newcommand{\nomen}[1]{{\textcolor{magenta}{#1}}}

\graphicspath{{figs/}}

\begin{abstract}
We are interested in solving linear systems arising from three applications: (1) kernel methods in machine learning, (2) discretization of boundary integral equations from mathematical physics, and (3) Schur complements formed in the factorization of many large sparse matrices. The coefficient matrices are often data-sparse in the sense that their off-diagonal blocks have low numerical ranks; specifically, we focus on ``hierarchically off-diagonal low-rank (HODLR)'' matrices. We introduce algorithms for factorizing HODLR matrices and for applying the factorizations on a GPU. The algorithms leverage the efficiency of batched dense linear algebra, and they scale nearly linearly with the matrix size when the numerical ranks are fixed. The accuracy of the HODLR-matrix approximation is a tunable parameter, so we can construct high-accuracy fast direct solvers or low-accuracy robust preconditioners. Numerical results show that we can solve problems with several millions of unknowns in a couple of seconds on a single GPU.
\end{abstract}


\begin{IEEEkeywords}
Linear solver on GPU, boundary integral equation, kernel matrix, elliptic partial differential equations, hierarchical low-rank approximation, batched dense linear algebra, rank structured matrix, hierarchical matrix, LU factorization.
\end{IEEEkeywords}

\section{Introduction}
Consider the solution of a large {dense} linear system
\begin{equation} \label{e:axb}
A \, x = b, \quad A \in \mathbb{F}^{N\times N}, {x \text{ and } b} \in \mathbb{F}^N,
\end{equation}
on a GPU, where $\mathbb{F}$ is the field of real or complex numbers.
We focus on the situation where the coefficient matrix $A$ can be approximated by \emph{hierarchically off-diagonal low-rank} (HODLR) matrices~\cite{ambikasaran2013mathcal,aminfar2016fast}.
Given a matrix $X$ partitioned into a $2 \times 2$ block form:
\begin{equation} \label{e:hodlr}
X =
\begin{pmatrix}
X_{11} & X_{12} \\
X_{21} & X_{22}
\end{pmatrix}
,
\end{equation}
we say $X$ is a HODLR matrix if (1) the two off-diagonal blocks $X_{12}$ and $X_{21}$ are low rank, and (2) the two diagonal blocks $X_{11}$ and $X_{22}$ have the same off-diagonal low-rank structure or have sufficiently small sizes. (See a pictorial illustration in \cref{f:hodlr}.)

Solving \cref{e:axb} on a GPU using the HODLR-matrix approximation is particularly appealing for two reasons. (1) The approximation reduces the required memory footprint and thus allows solving much larger problem sizes than storing the entire matrix $A$. For example, we were able to solve problems with several millions of unknowns on a single GPU that has only 32 GB of memory (see, e.g., \cref{t:bie_dr}). (2) Our algorithms are built upon the batched matrix-matrix multiplication and the batched LU factorization routines, which are highly efficient on GPUs. For example, the construction of our GPU solver achieved approximately 2 TFlop/s on an NVIDIA V100 GPU (see \cref{f:flops}).

HODLR matrices arise from a range of applications across science,
engineering, and data analytics, including:

\paragraph{Kernel matrices}

Given a (real) kernel function $\mathcal{K}$, such as the Gaussian kernel and the Matern kernel,  and a data set $\{{y}_i\}_{i=1}^N$, the associated kernel matrix ${K}  \in \mathbb{R}^{N\times N}$ is defined as
\[
{K}_{i,j} = \mathcal{K}({y}_i,{y}_j), \quad\quad \forall i,j = 1,2,\ldots,N.
\]
Such matrices arise in machine learning~\cite{gray2001n,hofmann2008kernel} and data assimilation~\cite{ambikasaran2013fast,li2014kalman}. Ambikasaran et al.~\cite{ambikasaran2014fast,ambikasaran2015fast} demonstrated that these matrices can be approximated efficiently by HODLR matrices. See also our numerical results in \cref{s:kernel}.

\paragraph{Boundary integral equations}

Some boundary value problems (BVPs) involving, {e.g.}, the Laplace and the Helmholtz equations, can be reformulated as boundary integral equations (BIEs) of the form
\begin{equation} \label{e:ie}
a(x) u(x) + \int_{\Gamma} K(x,y) u(y) dy = f(x)
\end{equation}
where $K(\cdot, \cdot)$ is derived from the free-space fundamental solution associated with the elliptic operator; where $a(\cdot)$ and $f(\cdot)$ are given functions; and where $u(\cdot)$ is the unknown. While it may be challenging to discretize the original partial differential equations (PDEs) directly, the BIEs offer several advantages (see, e.g., \cite[Chapter~10]{martinsson2019fast}). Although the discretization of \cref{e:ie} leads to a dense linear system, the discretized integral operator can be approximated efficiently by a HODLR matrix in many environments. See \cref{s:laplace,s:helmholtz} for two concrete examples.

\paragraph{Elliptic PDEs}

The discretization of an  elliptic PDE
\[
- \nabla \cdot (a(x) \nabla u(x)) + b(x) u(x) = f(x),
\]
where $a(\cdot)$, $b(\cdot)$, and $f(\cdot)$ are given functions, leads to a \emph{sparse} linear system to be solved.
While sparse direct solvers~\cite{davis2016survey} based on Gaussian elimination are extremely robust, they typically
require significant computing resources to handle dense Schur complements arising during the elimination.
That significant acceleration can be attained by exploiting rank-structures in these Schur complements was established
in, e.g., \cite{aminfar2016fast,xia2010superfast,10.1007/s10915-008-9240-6}.

\subsection{Previous work}

Classical direct methods for solving \cref{e:axb} based on LU or QR factorizations admit highly efficient implementations on GPUs, but are limited by their unfavorable $\bigO(N^3)$ scaling in terms of operations. For GPU computing, the $\bigO(N^2)$ storage complexity can be even more restrictive.

This paper concerns a class of methods that exploit the numerically low-rank property of off-diagonal blocks in the matrix $A$. As many theoretical and empirical results have demonstrated, certain off-diagonal blocks in $A$ can be compressed efficiently by their low-rank approximations. Based on this observation, the tile low-rank (TLR) approximation~\cite{ltaief2021meeting,amestoy2019performance} views matrix $A$ as a block matrix and compresses off-diagonal blocks using low-rank approximations. With the TLR approximation, the LU factorization or the Cholesky factorization of matrix $A$ can be accelerated significantly. This approach has been realized efficiently on shared and distributed-memory systems for covariance matrices~\cite{akbudak2017tile} and BIEs~\cite{al2020solving}. It is also implemented on GPUs recently~\cite{boukaram2021h2opus}. However, the asymptotic complexity for solving \cref{e:axb} is generally super-linear and the cost can be significant when the matrix size $N$ is large.

To arrive at linear or nearly linear complexities, a hierarchical decomposition of matrix $A$ is needed, and several hierarchically low-rank approximations have been proposed including $\mathcal{H}$ matrices~\cite{hackbusch1999sparse,5555}, $\mathcal{H}^2$ matrices~\cite{hackbusch2002data}, hierarchically semi-separable (HSS) matrices~\cite{martinsson2005fast,chandrasekaran2006fast,xia2010fast} and HODLR matrices. See related algorithms for solving \cref{e:axb} in, e.g., ~\cite{grasedyck2009domain,kriemann2013fancyscript,ambikasaran2013fast,ambikasaran2015fast,ho2016hierarchical,rouet2016distributed,coulier2017inverse,minden2017recursive,martinsson2019fast,xia2021multi,liu2021sparse,sushnikova2022fmm} and the references therein. However, few of these methods have been implemented to achieve high performance on modern computing architectures. Two recent software packages for HODLR matrices include (1)  \hodlrlib~\cite{ambikasaran2019hodlrlib}, which is an open-source code written in C++ and parallelized with OpenMP for multicore CPUs, and (2)  hm-toolbox~\cite{massei2020hm}, which is a MATLAB code that implements many operations involving HODLR matrices focusing on prototyping algorithms and ensuring reproducibility rather than delivering high performance.



\subsection{Contributions}

We introduce a new data structure for HODLR matrices, where the low-rank bases of all off-diagonal blocks are concatenated into two big matrices (see an example in \cref{f:cat}). This strategy aggregates small sub-blocks in the bases that need to be processed during the factorization into large sub-blocks in the big matrices. As a result, we can combine sequences of BLAS/LAPACK calls of low arithmetic intensities into a single kernel launch, which increases the Flop rate and mitigates the overhead of data transfer. This approach naturally leads to (1) a high-performance factorization algorithm that delivered up to 20 GFlop/s on a single CPU core in our experiments, and (2) parallel algorithms for factorizing a HODLR matrix and for applying the factorization to solve linear systems. Furthermore, we implemented the parallel algorithms leveraging existing high-performance  batched matrix-matrix multiplication and  batched LU factorization routines on a GPU. We present numerical benchmarks for solving large dense linear systems arising from kernel methods and two dimensional BIEs, and we compare the performance of our methods to existing methods/codes.

%
%
%
%
%
%

\section{Preliminariles}

We adopt the following MATLAB notations: (1) $A(\mathcal{I}, :)$ and $A(:, \mathcal{I})$ denote the rows and columns in matrix $A$ corresponding to an index set $\mathcal{I}$, respectively; and (2) $[ \, A  \, | \,  B \, ]$ denotes the concatenation of two matrices $A$ and $B$ that have the same number of rows.
Matrices and vectors are denoted using upper and lower case letters, respectively.
In particular, $I$ is used to denote an identity matrix of an appropriate size.
We use greek letters to denote nodes in a tree data structure.

\subsection{HODLR matrix}

Here we give  (non-recursive) definitions of a cluster tree and the associated HODLR format, which follow closely with~\cite{massei2020hm, martinsson2019fast}.
A cluster tree stands for a hierarchical partitioning of the row/column indices of a matrix, which dictates a tessellation of the matrix  in a HODLR format.

\begin{definition}[Cluster tree] \label{d:tree}
Given an index set $\mathcal{I} \coloneqq \{1,2,\ldots,N\}$, a {cluster tree} $\mathcal{T}_L$ is a binary tree that satisfies the following three conditions:
\begin{enumerate}
\item
There are $L+1$ levels, namely, $0,1,\ldots,L$, and there are $2^{\ell}$ nodes at level $\ell$.

\item
Every tree node  stands for a (nonempty) consecutive subset of $\mathcal{I}$. In particular, the root node represents $\mathcal{I}$.

\item
The union of two subsets owned by a pair of siblings, respectively, equals to the subset owned by their parent. (Nodes at the same level form a partitioning of $\mathcal{I}$.)

\end{enumerate}
\end{definition}

\cref{f:tree} shows an example of {$\mathcal{T}_2$}. In practice, the indices in $\mathcal{I}$ are usually associated with points in {$\mathbb{R}^k$}, so $\mathcal{T}_L$ can be computed using some recursive bisection strategies. For example, $\mathcal{T}_L$ can be constructed as a $k$-d tree.
Notice that a cluster tree $\mathcal{T}_L$ naturally leads to a tessellation of a matrix $A \in \mathbb{F}^{N\times N}$:
\begin{enumerate}
\item
 A leaf node $\alpha$ corresponds to a diagonal block $A(\mathcal{I}_{\alpha}, \mathcal{I}_{\alpha})$, and there are $2^L$ of them.

\item
A pair of siblings $\alpha$ and $\beta$ corresponds to an off-diagonal block $A(\mathcal{I}_{\alpha}, \mathcal{I}_{\beta})$, and there are $2^{L+1} - 2$ of them.
\end{enumerate}
For example, \cref{f:hodlr} illustrates such a tessellation.


\begin{figure}
\centering
\begin{tikzpicture}[level/.style={sibling distance=30mm/#1, level distance = 1.1cm}]
\node [circle,draw] (z){$1$}
  child {node [circle,draw] (a) {$2$}
    child {node [circle,draw] (b) {$4$}
        child [grow=left] {node (q) [xshift = -0.8cm] {level 2} edge from parent[draw=none]
          child [grow=up] {node (r) {level 1} edge from parent[draw=none]
            child [grow=up] {node (s) {level 0} edge from parent[draw=none]
            }
          }
        }
    }
    child {node [circle,draw] (g) {$5$}
    }
  }
  child {node [circle,draw] (j) {$3$}
    child {node [circle,draw] (k) {$6$}
    }
    child {node [circle,draw] (l) {$7$}
    }
};
\node at (1.2,0.1)   {\scriptsize $\mathcal{I}_1=1:400$};
\node at (2.5,-0.6)   {\scriptsize $\mathcal{I}_3=201:400$};
\node at (3.1,-1.7)   {\scriptsize $\mathcal{I}_7=301:400$};
\node at (-1.8,-0.6)   {\scriptsize $\mathcal{I}_2=1:200$};
\node at (0.,-1.7)   {\scriptsize $\mathcal{I}_5=101:200$};
\node at (-2.7,-1.7)   {\scriptsize $\mathcal{I}_4=1:100$};
\end{tikzpicture}
\caption{An example of hierarchical decomposition of matrix row/column indices $\mathcal{I}=\{1,2,\ldots,400\}$. The root is the entire set $\mathcal{I}_1=\mathcal{I}$, and there are four leaf nodes in this case. Some of the individual index vectors are given. Node 2 has two ``children'' 4 and 5, who are ``siblings''.}
\label{f:tree}
\end{figure}

\begin{figure}
\centering
\begin{tikzpicture}
\node at (-0.7, 2)   {$A =$};
\draw[very thick] (0,0) rectangle (4,4);
\draw[gray, thick] (2,0) -- (2,4);
\draw[gray, thick] (0,2) -- (4,2);
\draw[gray] (3,0) -- (3,2);
\draw[gray] (1,2) -- (1,4);
\draw[gray] (0,3) -- (2,3);
\draw[gray] (2,1) -- (4,1);
\node at (1,1)   {$A_{3,2}$};
\node at (3,3)   {$A_{2,3}$};
\node at (0.5,2.5)   {$A_{5,4}$};
\node at (1.5,3.5)   {$A_{4,5}$};
\node at (2.5,0.5)   {$A_{7,6}$};
\node at (3.5,1.5)   {$A_{6,7}$};
\node at (0.5,3.5)   {$D_{4}$};
\node at (1.5,2.5)   {$D_{5}$};
\node at (2.5,1.5)   {$D_{6}$};
\node at (3.5,0.5)   {$D_{7}$};
\end{tikzpicture}
\caption{Matrix tessellation corresponding to the cluster tree in \cref{f:tree}. A diagonal block $D_{{\alpha}}$ is defined as $A(\mathcal{I}_{\alpha}, \mathcal{I}_{\alpha})$, where ${\alpha}$ is a leaf node.}
\label{f:hodlr}
\end{figure}


\begin{definition}[HODLR matrix]
Given  a {cluster tree} $\mathcal{T}_L$,  a matrix $A \in \mathbb{F}^{N\times N}$ is a HODLR matrix if every off-diagonal block $A(\mathcal{I}_{\alpha}, \mathcal{I}_{\beta})$ that corresponds to a pair of siblings $\alpha$ and $\beta$ in $\mathcal{T}_L$ has low rank.
We say a HODLR matrix has rank $r$ if the maximum rank of all off-diagonal blocks is $r$.
\end{definition}


\subsection{Construction of a HODLR matrix}

The construction of a HODLR matrix is straightforward in the sense that we need to only compress certain off-diagonal blocks in the original matrix. We refer interested readers to Ambikasaran's PhD thesis~\cite{ambikasaran2013fast} for a review of algebraic and analytic techniques to compute the low-rank approximations. See also~\cite{dong2021simpler} for some recent advances on randomized methods.
In situations where a fast matrix-vector product routine exists for the matrix to be compressed, the so-called ``peeling algorithms''~\cite{lin2011fast,martinsson2016compressing,levitt2022linear} have been developed for the construction of a HODLR approximation. 

Several parallel algorithms have been developed to construct more complicated formats than HODLR matrices, and some of them can be used to construct a HODLR-matrix approximation. Fernando et al~\cite{fernando2017scalable} demonstrates the construction of an HSS matrix on multiple GPUs using an almost matrix-free method introduced in~\cite{martinsson2011fast}. Boukaram et al~\cite{boukaram2019randomized} introduces a matrix-free method to construct an $\mathcal{H}^2$ matrix on a GPU. Chenhan et al~\cite{yu2017geometry,chenhan2018distributed} develop algorithms on multicore and distributed-memory machines to approximate a symmetric positive definite (SPD) matrix with hierarchically low-rank structures.

\section{Algorithms}

Assume a rank-$r$ HODLR matrix $A \in \mathbb{F}^{N\times N}$ and the underlying cluster tree $\mathcal{T}_L$ are given. Our focus here is factorizing the HODLR matrix and applying the factorization to solve a linear system
\begin{equation} \label{e:hxb}
A\, x = b.
\end{equation}
Let every off-diagonal block $A(\mathcal{I}_{\alpha}, \mathcal{I}_{\beta})$ (that corresponds to a pair of siblings in $\mathcal{T}_L$) be that
\begin{equation} \label{e:rank}
A(\mathcal{I}_{\alpha}, \mathcal{I}_{\beta}) = U_{\alpha} V_{\beta}^*,
\end{equation}
where $U_{\alpha}$ and $V_{\beta}$ are two skinny matrices, and we call them the left and the right low-rank bases.

\subsection{Recursive algorithm}

Consider a partitioning of \cref{e:hxb} into
\begin{equation} \label{e:axb_blk}
\begin{pmatrix}
A_{\alpha} & U_{\alpha} V_{\beta}^* \\
U_{\beta} V_{\alpha}^* & A_{\beta}
\end{pmatrix}
\begin{pmatrix}
x_{\alpha} \\
x_{\beta}
\end{pmatrix}
=
\begin{pmatrix}
b_{\alpha} \\
b_{\beta}
\end{pmatrix},
\end{equation}
where $\alpha$ and $\beta$ are the two nodes at level 1 in $\mathcal{T}_L$. Suppose we can solve the following two subproblems with multiple right-hand sides:
%
\begin{equation} \label{e:recursion}
\left\{
\begin{array}{c}
A_{\alpha} \, z_{\alpha} = b_{\alpha} \\
A_{\alpha} \, Y_{\alpha} = U_{\alpha}
\end{array}
\right.
\quad
\text{and}
\quad
\left\{
\begin{array}{c}
A_{\beta} \, z_{\beta} = b_{\beta} \\
A_{\beta} \, Y_{\beta} = U_{\beta}
\end{array}
\right.
.
\end{equation}
Then, the  solution of \cref{e:hxb} can be obtained via
\begin{equation} \label{e:solution}
\begin{pmatrix}
x_{\alpha} \\
x_{\beta}
\end{pmatrix}
=
\begin{pmatrix}
z_{\alpha} \\
z_{\beta}
\end{pmatrix}
-
\begin{pmatrix}
Y_{\alpha} w_{\alpha} \\
Y_{\beta} w_{\beta}
\end{pmatrix}
,
\end{equation}
where $w_{\alpha}$ and $w_{\beta}$ are from solving 
\begin{equation} \label{e:node}
\begin{pmatrix}
V_{\alpha}^* Y_{\alpha} & I \\
I & V_{\beta}^* Y_{\beta}
\end{pmatrix}
\begin{pmatrix}
w_{\alpha} \\
w_{\beta}
\end{pmatrix}
=
\begin{pmatrix}
V_{\alpha}^* z_{\alpha} \\
V_{\beta}^* z_{\beta}
\end{pmatrix}
.
\end{equation}
Obviously, \cref{e:recursion} can be solved recursively when $\alpha$ and $\beta$ are not leaves in $\mathcal{T}_L$; otherwise, we solve \cref{e:recursion} directly using, e.g., the LU factorization. This recursive algorithm initially appeared in~\cite{aminfar2016fast}, and we prove its correctness below.

\begin{theorem} [Correctness of recursion]
\label{t:recursion}
Assume the following three matrices are invertible:
\[
A_{\alpha}, \quad A_{\beta}, {\text \quad and \quad}
\begin{pmatrix}
V_{\alpha}^* Y_{\alpha} & I \\
I & V_{\beta}^* Y_{\beta}
\end{pmatrix}
.
\]
Then, \cref{e:solution} is the solution of \cref{e:axb_blk}.
\end{theorem}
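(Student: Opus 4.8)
The plan is to prove the statement by direct substitution: I would plug the proposed solution \cref{e:solution} into the block system \cref{e:axb_blk} and verify that each of its two block equations holds. The invertibility hypotheses serve two roles that I would note at the outset. Invertibility of $A_\alpha$ and $A_\beta$ guarantees that the auxiliary quantities $z_\alpha, z_\beta, Y_\alpha, Y_\beta$ in \cref{e:recursion} are well defined, and invertibility of the $2\times 2$ reduced matrix guarantees that $w_\alpha$ and $w_\beta$ in \cref{e:node} exist and are unique. With these quantities in hand, the only facts I would use to simplify are the defining relations $A_\alpha z_\alpha = b_\alpha$, $A_\alpha Y_\alpha = U_\alpha$, $A_\beta z_\beta = b_\beta$, and $A_\beta Y_\beta = U_\beta$.

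For the first block row, I would substitute $x_\alpha = z_\alpha - Y_\alpha w_\alpha$ and $x_\beta = z_\beta - Y_\beta w_\beta$ into $A_\alpha x_\alpha + U_\alpha V_\beta^* x_\beta$. Using $A_\alpha z_\alpha = b_\alpha$ and $A_\alpha Y_\alpha = U_\alpha$, the expression collapses to $b_\alpha + U_\alpha\bigl(V_\beta^* z_\beta - w_\alpha - V_\beta^* Y_\beta w_\beta\bigr)$. The key observation is that the parenthesized factor is, up to sign, exactly the residual of the \emph{second} row of the reduced system \cref{e:node}, namely $w_\alpha + (V_\beta^* Y_\beta) w_\beta - V_\beta^* z_\beta$, which vanishes because $(w_\alpha, w_\beta)$ solves \cref{e:node}. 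Hence the first block equation reduces to $b_\alpha = b_\alpha$. The second block row is handled symmetrically: substituting into $U_\beta V_\alpha^* x_\alpha + A_\beta x_\beta$ and using $A_\beta z_\beta = b_\beta$, $A_\beta Y_\beta = U_\beta$ yields $b_\beta + U_\beta\bigl(V_\alpha^* z_\alpha - (V_\alpha^* Y_\alpha) w_\alpha - w_\beta\bigr)$, whose parenthesized factor is the residual of the \emph{first} row of \cref{e:node} and therefore vanishes.

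I do not anticipate a genuine obstacle; the content is an algebraic verification. The one point requiring care is the cross-pairing induced by the off-diagonal structure: the first block equation is certified by the second row of the reduced system and vice versa, so I would keep the bookkeeping explicit to avoid matching the wrong row. I would also stress that the argument only requires each $U$-coefficient to annihilate a vector that the $w$-system forces to zero; in particular, it does \emph{not} require $U_\alpha$ or $U_\beta$ to have full column rank, so no rank assumption on the bases is needed. Finally, if one additionally wants uniqueness of $(x_\alpha, x_\beta)$, it follows from a Schur-complement argument showing that the full block matrix in \cref{e:axb_blk} is invertible under the stated hypotheses, but this is not needed for the statement as phrased.
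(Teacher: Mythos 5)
Your proof is correct, but it takes a genuinely different route from the paper. The paper first multiplies \cref{e:axb_blk} by $\begin{pmatrix} A_{\alpha}^{-1} & \\ & A_{\beta}^{-1}\end{pmatrix}$ to reduce the claim to inverting the ``identity plus low-rank'' system $\begin{pmatrix} I & Y_{\alpha}V_{\beta}^* \\ Y_{\beta}V_{\alpha}^* & I\end{pmatrix}x = z$, and then invokes the Woodbury formula \cref{e:wood} to write down that inverse explicitly; the small capacitance matrix appearing in Woodbury is (up to a block-column swap) exactly the matrix assumed invertible, which is where the reduced system \cref{e:node} comes from. You instead verify by direct substitution that \cref{e:solution} satisfies both block rows of \cref{e:axb_blk}, using only the defining relations \cref{e:recursion} and the two rows of \cref{e:node}; your bookkeeping of the cross-pairing (first block equation certified by the \emph{second} row of \cref{e:node}, and vice versa) is accurate, and your observation that no column-rank assumption on $U_{\alpha},U_{\beta}$ is needed is also correct. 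The trade-off: your argument is more elementary and self-contained (no Woodbury identity needed, and the invertibility hypotheses enter only to make $z_{\alpha},z_{\beta},Y_{\alpha},Y_{\beta},w_{\alpha},w_{\beta}$ well defined), but by itself it establishes only that \cref{e:solution} is \emph{a} solution; the paper's Woodbury route simultaneously shows the coefficient matrix is invertible, so existence and uniqueness come in one stroke, justifying the definite article in the theorem. You correctly flag this and note that uniqueness follows from a Schur-complement argument under the stated hypotheses, so no genuine gap remains, though for the statement as the paper intends it that remark is best promoted from an aside to part of the proof.
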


\begin{proof}
Multiply
$\begin{pmatrix}
A_{\alpha}^{-1} &  \\
 & A_{\beta}^{-1}
\end{pmatrix}$
on both sizes of \cref{e:axb_blk}, and it is sufficient to prove
\[
\begin{pmatrix}
I & Y_{\alpha} V_{\beta}^* \\
Y_{\beta} V_{\alpha}^* & I
\end{pmatrix}
\begin{pmatrix}
x_{\alpha} \\
x_{\beta}
\end{pmatrix}
=
\begin{pmatrix}
z_{\alpha} \\
z_{\beta}
\end{pmatrix}.
\]
According to the Woodbury formula, we know that
\begin{align} \label{e:wood}
&
\begin{pmatrix}
I & Y_{\alpha} V_{\beta}^* \\
Y_{\beta} V_{\alpha}^* & I
\end{pmatrix}
^{-1}
=
\left[
I +
\begin{pmatrix}
 & Y_{\alpha}  \\
Y_{\beta} &
\end{pmatrix}
\begin{pmatrix}
V_{\alpha}^* & \\
& V_{\beta}^*
\end{pmatrix}
\right
]^{-1}
\notag \\
&= I -
\begin{pmatrix}
 & Y_{\alpha}  \\
Y_{\beta} &
\end{pmatrix}
\begin{pmatrix}
I & V_{\alpha}^* Y_{\alpha}  \\
V_{\beta}^* Y_{\beta} & I
\end{pmatrix}
^{-1}
\begin{pmatrix}
V_{\alpha}^* & \\
& V_{\beta}^*
\end{pmatrix}
.
\end{align}
Therefore, it is straightforward to verify that \cref{e:solution} holds.
\end{proof}

Observe that some calculations in the recursion do not depend on the right-hand side $b$, and thus intermediate results can be precomputed. In particular, we divide the computation into two stages:
\begin{itemize}
\item
Factorization: we compute $Y_{\alpha}$, $Y_{\beta}$, and an LU factorization of
\begin{equation} \label{e:k}
K_{\gamma} \triangleq \begin{pmatrix}
V_{\alpha}^* Y_{\alpha} & I \\
I & V_{\beta}^* Y_{\beta}
\end{pmatrix},
\end{equation}
where $\gamma$ is the parent of $\alpha$ and $\beta$.

\item
Solution: we compute $z_{\alpha}$, $z_{\beta}$, $w_{\alpha}$, $w_{\beta}$, $x_{\alpha}$, and $x_{\beta}$.
\end{itemize}
Notice that some of the calculations can be done in-place (overwriting  inputs with outputs). For example, we can overwrite two U matrices with corresponding Y matrices. As a result, the extra memory required by the algorithm mostly comes from storing the LU factorization of the coefficient matrix and the right-hand sides in \cref{e:node}, which is negligible when the rank in \cref{e:rank} is small.
%

\subsection{Data structure and nonrecursive algorithm}

For simplicity, let us assume the rank of all off-diagonal blocks is $r$. (The following presentation extends to general cases where the ranks vary.) To motivate the new data structure we use in our parallel algorithms, we illustrate the main idea through the following example.

\begin{example} 
Consider the HODLR matrix in \cref{f:hodlr}, where the underlying cluster tree has  three levels (level 0, 1, and 2).
An appropriate partitioning of \cref{e:hxb} leads to
\[
\begin{pmatrix}
A_{2} & U_{2} V_{3}^* \\
U_{3} V_{2}^* & A_{3}
\end{pmatrix}
\begin{pmatrix}
x_{2} \\
x_{3}
\end{pmatrix}
=
\begin{pmatrix}
b_{2} \\
b_{3}
\end{pmatrix},
\]
where $A_{2}$ and $A_{3}$ are themselves (2-level) HODLR matrices; $b_{2} = b(\mathcal{I}_2)$ and $b_{3} = b(\mathcal{I}_3)$ are subvectors in the right-hand side $b$; and $x_{2} = x(\mathcal{I}_2)$ and $x_{3} = x(\mathcal{I}_3)$ are subvectors in the solution $x$.

Let us write down the recursion steps.
At the first step, we have two subproblems:
\begin{equation*}
\left\{
\begin{array}{c}
A_{2} \, z_{2} = b_{2} \\
A_{2} \, Y_{2} = U_{2}
\end{array}
\right.
\quad
\text{and}
\quad
\left\{
\begin{array}{c}
A_{3} \, z_{3} = b_{3} \\
A_{3} \, Y_{3} = U_{3}
\end{array}
\right.
.
\end{equation*}
Rewrite them more compactly as
\begin{equation*}
A_{2} \, [  \, z_{2}  \, |  \, Y_{2}  \, ] = [  \, b_{2}  \, |  \, U_{2}  \, ]
\quad
\text{and}
\quad
A_{3} \, [ \, z_{3} \,  |  \, Y_{3}  \, ] = [  \, b_{3}  \, |  \, U_{3}  \, ],
\end{equation*}
which again can be partitioned appropriately into
\begin{align*}
&
\begin{pmatrix}
D_{4} & U_{4} V_{5}^* \\
U_{5} V_{4}^* & D_{5}
\end{pmatrix}
\left(
\begin{array}{c|c}
z_{4} & Y_{2}^{\text{top}}  \\
z_{5} & Y_{2}^{\text{bot}}
\end{array}
\right)
=
\left(
\begin{array}{c|c}
b_{4} & U_{2}^{\text{top}}  \\
b_{5} & U_{2}^{\text{bot}}
\end{array}
\right)
\\
\text{and} \quad
&
\begin{pmatrix}
D_{6} & U_{6} V_{7}^* \\
U_{7} V_{6}^* & D_{7}
\end{pmatrix}
\left(
\begin{array}{c|c}
z_{6} & Y_{3}^{\text{top}}  \\
z_{7} & Y_{3}^{\text{bot}}
\end{array}
\right)
=
\left(
\begin{array}{c|c}
b_{6} & U_{3}^{\text{top}}  \\
b_{7} & U_{3}^{\text{bot}}
\end{array}
\right)
,
\end{align*}
where $b_{4} = b(\mathcal{I}_4)$, $b_{5} = b(\mathcal{I}_5)$, $b_{6} = b(\mathcal{I}_6)$, and $b_{7} = b(\mathcal{I}_7)$ are subvectors in the right-hand side $b$ (notice that $\mathcal{I}_4 \cup \mathcal{I}_5 = \mathcal{I}_2$ and $\mathcal{I}_6 \cup \mathcal{I}_7 = \mathcal{I}_3$ according to the cluster tree in \cref{f:tree}).

At the second step, we have four subproblems:
\begin{align*}
&
\left\{
\begin{array}{c}
D_{4} \, [ \, z_{4}  \, |  \, Y_{2}^{\text{top}}  \, |  \, Y_{4}  \, ] = [ \, b_{4}  \, | \,  U_{2}^{\text{top}}  \, | \,  U_{4}  \,] \\
D_{5} \, [ \, z_{5}  \, |  \, Y_{2}^{\text{bot}}  \, |  \, Y_{5}  \, ] = [ \, b_{5}  \, | \,  U_{2}^{\text{bot}}  \, | \,  U_{5}  \,]
\end{array}
\right.
\\
\text{and}
\quad
&
\left\{
\begin{array}{c}
D_{6} \, [ \, z_{6}  \, |  \, Y_{3}^{\text{top}}  \, |  \, Y_{6}  \, ] = [ \, b_{6}  \, | \,  U_{3}^{\text{top}}  \, | \,  U_{6}  \,] \\
D_{7} \, [ \, z_{7}  \, |  \, Y_{3}^{\text{bot}}  \, |  \, Y_{7}  \, ] = [ \, b_{7}  \, | \,  U_{3}^{\text{bot}}  \, | \,  U_{7}  \,]
\end{array}
\right.
,
\end{align*}
where the four $D$ matrices are corresponding diagonal blocks (that have full ranks), and we solve them directly using, e.g., the LU factorization.

Observe the right-hand sides in the above four subproblems, where the $U$ matrices have been partitioned and then concatenated together. This motivates the idea of concatenating all the  $U$ bases into one big matrix $U_{\text{big}}$ as shown in \cref{f:cat}. The same strategy is applied to form a big matrix $V_{\text{big}}$ of all the $V$ bases for reasons that will be clear when we introduce our GPU algorithms. We also concatenate diagonal blocks and K matrices defined at \cref{e:k}  for ease of presentation.

\end{example}

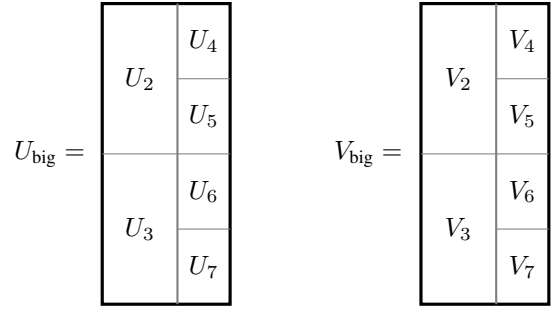
\begin{figure}
\centering
\begin{tikzpicture}
\node at (-0.7, 2)   {$U_{\text{big}} =$};
\draw[very thick] (0,0) rectangle (1.7,4);
\draw[gray, thick] (1,0) -- (1,4);
\draw[gray] (0,2) -- (1.7,2);
\draw[gray] (1,1) -- (1.7,1);
\draw[gray] (1,3) -- (1.7,3);
\node at (0.5, 3)   {$U_2$};
\node at (0.5, 1)   {$U_3$};
\node at (1.35, 0.5)   {$U_7$};
\node at (1.35, 1.5)   {$U_6$};
\node at (1.35, 2.5)   {$U_5$};
\node at (1.35, 3.5)   {$U_4$};
\end{tikzpicture}
\hspace{1cm}
\begin{tikzpicture}
\node at (-0.7, 2)   {$V_{\text{big}} =$};
\draw[very thick] (0,0) rectangle (1.7,4);
\draw[gray, thick] (1,0) -- (1,4);
\draw[gray] (0,2) -- (1.7,2);
\draw[gray] (1,1) -- (1.7,1);
\draw[gray] (1,3) -- (1.7,3);
\node at (0.5, 3)   {$V_2$};
\node at (0.5, 1)   {$V_3$};
\node at (1.35, 0.5)   {$V_7$};
\node at (1.35, 1.5)   {$V_6$};
\node at (1.35, 2.5)   {$V_5$};
\node at (1.35, 3.5)   {$V_4$};
\end{tikzpicture}
\caption{Concatenation of all low-rank bases into two big matrices for the HODLR matrix in \cref{f:hodlr} assuming the ranks at the same level are the same.}
\label{f:cat}
\end{figure}

\begin{figure}
\centering
\begin{tikzpicture}
\node at (-0.7, 1.4)   {$D_{\text{big}}^2 =$};
\draw[very thick] (0,0) rectangle (0.7, 2.8);
\draw[gray] (0, 0.7) -- (0.7, 0.7);
\draw[gray] (0, 1.4) -- (0.7, 1.4);
\draw[gray] (0, 2.1) -- (0.7, 2.1);
\node at (0.35, 0.35)   {$D_7$};
\node at (0.35, 1.05)   {$D_6$};
\node at (0.35, 1.75)   {$D_5$};
\node at (0.35, 2.45)   {$D_4$};
\node at (2.3, 1.4)   {$K_{\text{big}}^1 =$};
\draw[very thick] (3,0.7) rectangle (3.7, 2.1);
\draw[gray] (3, 1.4) -- (3.7, 1.4);
\node at (3.35, 1.05)   {$K_3$};
\node at (3.35, 1.75)   {$K_2$};
\node at (5.3, 1.4)   {$K_{\text{big}}^0 =$};
\draw[very thick] (6,1.05) rectangle (6.7, 1.75);
\node at (6.35, 1.4)   {$K_1$};
\end{tikzpicture}
\caption{Concatenation of diagonal blocks and K matrices defined at \cref{e:k} for the HODLR matrix in \cref{f:hodlr} assuming the ranks at the same level are the same. A superscript $\ell$ means a block-row view of the matrix, where matrix rows are partitioned according to nodes at the $\ell$-th level in the tree.}
\label{f:dk}
\end{figure}
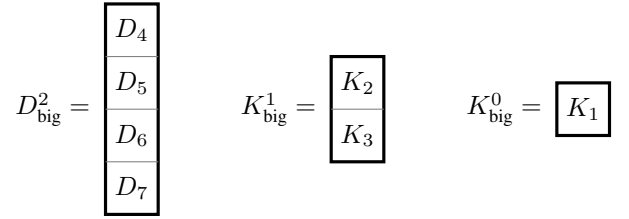


In general, we traverse $\mathcal{T}_L$ in a breadth-first (top-down level-by-level) order to create the two big matrices. Consider every node $\gamma$ that has two children $\alpha$ and ${\beta}$ (so $\alpha$ and ${\beta}$ are siblings). First, we place $U_{\alpha}$ and $U_{\beta}$ vertically on top of each other as
$\begin{pmatrix}
U_{\alpha} \\
U_{\beta}
\end{pmatrix}$ (the order does not matter). In general, $U_{\alpha}$ and $U_{\beta}$ may not have the same number of columns, and we align them to the left. We do the same thing with $V_{\alpha}$ and $V_{\beta}$.
Second, if $\gamma$ is not the root of $\mathcal{T}_L$, it is associated with low-rank bases $U_{\gamma}$ and $V_{\gamma}$, and we form the following concatenation
\begin{equation} \label{e:u}
\left[
\begin{array}{c|c}
U_{\gamma}
&
\begin{array}{c}
U_{\alpha} \\
U_{\beta}
\end{array}
\end{array}
\right]
\quad \text{and} \quad
\left[
\begin{array}{c|c}
V_{\gamma}
&
\begin{array}{c}
V_{\alpha} \\
V_{\beta}
\end{array}
\end{array}
\right]
.
\end{equation}
Notice that the number of rows in $U_{\gamma}$ equals to the sum of those in $U_{\alpha}$ and $U_{\beta}$ and the same applies to the $V$ matrices. Finally, we obtain two big matrices $U_{\text{big}}$ and $V_{\text{big}}$ after  traversing $\mathcal{T}_L$. 
With the previous assumption of constant rank, $U_{\text{big}}$ and $V_{\text{big}}$ are two matrices of size $N$-by-$rL$. In general, when ranks of off-diagonal blocks vary, $U_{\text{big}}$ and $V_{\text{big}}$ are no longer rectangular matrices. But the algorithms we are going to introduce can be generalized accordingly. In the same spirit, we concatenate the D matrices and K matrices as shown in \cref{f:dk}.

Given a  cluster tree $\mathcal{T}_L$ as defined in \cref{d:tree}, we can unroll the previous recursive algorithm into a for-loop based algorithm, which consists of a factorization stage and a solution stage as described in \cref{a:fact,a:solve}, respectively. The advantages of the new data structure are clear. We need only one BLAS or LAPACK call for the computation involving multiple left low-rank bases across different levels in $\mathcal{T}_L$, and there is no unnecessary data movement. For example, one LAPACK call (getrs) is sufficient for solving all the right-hand sides at a leaf node in $\mathcal{T}_L$ (line 4 in \cref{a:fact}).

Notice that a lot of memory allocation in \cref{a:fact} is avoided by overwriting inputs with outputs. For example, $Y_{\text{big}}$ overwrites $U_{\text{big}}$, and the factorizations of diagonal blocks are stored in place. As a result, \cref{a:fact} requires little extra memory when the rank $r$ is small.

\begin{algorithm}
\caption{Nonrecursive algorithm for solving \cref{e:hxb}: factorization stage}
\label{a:fact}
\begin{algorithmic}[1]
\Require diagonal blocks in $A$, matrix $U_{\text{big}}$, matrix $V_{\text{big}}$, and cluster tree $\mathcal{T}_L$.
\Ensure matrix $Y_{\text{big}}$ and stored factorizations.
%
\State $Y_{\text{big}} \gets U_{\text{big}}$
\hfill \Comment \emph{// $Y_{\text{big}}$ overwrites $U_{\text{big}}$.}
%
\For {leaf node $\alpha$ in $\mathcal{T}_L$}
\State Factorize diagonal block $D_{\alpha} = A(\mathcal{I}_{\alpha}, \mathcal{I}_{\alpha})$ and \underline{store its LU factorization in-place}.
\State Apply $D_{\alpha}^{-1}$ (the previous LU factorization) to solve multiple right-hand sides $Y_{\text{big}}(\mathcal{I}_{\alpha},:)$ {in-place}.
\EndFor
%
\For {level $\ell=L-1$ \textbf{to} $0$}
\For {tree node $\gamma$ at level $\ell$ in $\mathcal{T}_L$}
\State Let the children of $\gamma$ be $\alpha$ and $\beta$.
\State Factorize
$
K_{\gamma} =
\begin{pmatrix}
V_{\alpha}^* Y_{\alpha} & I \\
I & V_{\beta}^* Y_{\beta}
\end{pmatrix}$
and \underline{store its LU factorization in-place}.
\State Apply $K_{\gamma}^{-1}$ (the previous LU factorization) to solve
\begin{equation} \label{e:node1}
\begin{pmatrix}
V_{\alpha}^* Y_{\alpha} & I \\
I & V_{\beta}^* Y_{\beta}
\end{pmatrix}
\begin{pmatrix}
W_{\alpha} \\
W_{\beta}
\end{pmatrix}
=
\begin{pmatrix}
V_{\alpha}^* Y_{\text{big}}(\mathcal{I}_{\alpha}, 1:r\ell) \\
V_{\beta}^* Y_{\text{big}}(\mathcal{I}_{\beta}, 1:r\ell)
\end{pmatrix}
.
\end{equation}
\State Compute
\begin{equation} \label{e:node2}
Y_{\text{big}}(\mathcal{I}_{\gamma}, 1:r\ell)
\gets
Y_{\text{big}}(\mathcal{I}_{\gamma}, 1:r\ell)
-
\begin{pmatrix}
Y_{\alpha} W_{\alpha} \\
Y_{\beta} W_{\beta}
\end{pmatrix}
.
\end{equation}
\EndFor
\EndFor
\end{algorithmic}
\end{algorithm}

\begin{algorithm}
\caption{Nonrecursive algorithm for solving \cref{e:hxb}: solution stage}
\label{a:solve}
\begin{algorithmic}[1]
\Require  matrix $Y_{\text{big}}$, matrix $V_{\text{big}}$, stored LU factorizations, cluster tree $\mathcal{T}_L$, and right-hand side $b$.
\Ensure solution of \cref{e:hxb}, namely, $x$.
%
\State $x \gets b$
\hfill \Comment \emph{// $x$ overwrites $b$.}
%
\For {leaf node $\alpha$ in $\mathcal{T}_L$}
\State Apply $D_{\alpha}^{-1}$ ({precomputed LU factorization}) to solve right-hand side $x(\mathcal{I}_{\alpha})$ {in-place}.
\EndFor
%
\For {level $\ell=L-1$ \textbf{to} $0$}
\For {tree node $\gamma$ at level $\ell$ in $\mathcal{T}_L$}
\State Let the children of $\gamma$ be $\alpha$ and $\beta$.
\State Apply $K_{\gamma}^{-1}$ ({precomputed LU factorization}) to solve
\begin{equation} \label{e:node3}
\begin{pmatrix}
V_{\alpha}^* Y_{\alpha} & I \\
I & V_{\beta}^* Y_{\beta}
\end{pmatrix}
\begin{pmatrix}
w_{\alpha} \\
w_{\beta}
\end{pmatrix}
=
\begin{pmatrix}
V_{\alpha}^* x(\mathcal{I}_{\alpha}) \\
V_{\beta}^* x(\mathcal{I}_{\beta})
\end{pmatrix}
.
\end{equation}
\State Compute
\begin{equation} \label{e:node4}
x(\mathcal{I}_{\gamma})
\gets
x(\mathcal{I}_{\gamma})
-
\begin{pmatrix}
Y_{\alpha} w_{\alpha} \\
Y_{\beta} w_{\beta}
\end{pmatrix}
.
\end{equation}
\EndFor
\EndFor
\end{algorithmic}
\end{algorithm}

\subsection{Parallel algorithms for GPUs} \label{s:gpu}

Recall that in the recursive algorithm, the two  subproblems in \cref{e:recursion} are independent of each other and can be solved in parallel. Consider the underlying cluster tree of a HODLR matrix. If we associate the root with the task of solving the original linear system and every other tree node with the task of solving a corresponding subproblem, then the cluster tree is effectively a \emph{task graph}. In the task graph, every node depends on its two children if they exist.
The same analysis applies to the factorization stage and the solution stage as described in \cref{a:fact,a:solve}, respectively, and the underlying task graphs are exactly the same (definitions of tasks differ).
%

Let us focus on the factorization stage, and most of the following discussion applies to the solution stage as well. It is clear that in the underlying task graph, all the nodes (tasks) at the same level are embarrassingly parallel. In other words, the two for-loops at lines 2\&7 in \cref{a:fact} can be parallelized. Indeed, the \hodlrlib{} library~\cite{ambikasaran2019hodlrlib} employs the ``parallel-for'' directive in OpenMP~\cite{chandra2001parallel} to parallelize the two for-loops. In \cref{a:fact}, it is also obvious that the nodes (tasks)  at the same level work on different subblocks in $U_{\text{big}}$ and $V_{\text{big}}$, which
 correspond to a (consecutive) partitioning of the row indices. 

Notice that in the factorization stage, every leaf task requires solving a linear system with multiple right-hand sides and  every nonleaf task comprises solving a linear system and matrix-matrix multiplications (gemm's). Therefore,  \cref{a:fact} can be transformed into a parallel algorithm by simply batching the BLAS and LAPACK calls from tasks at the same level. In particular, we take advantage of the batched LU factorization/solution (getrfBatched/getrsBatched) and the batched gemm (gemmBatched) from cuBLAS\footnote{\url{https://docs.nvidia.com/cuda/cublas/index.html}} in our GPU algorithms. With our new data structure that concatenates the left low-rank bases into one (big) matrix, we need only one cuBLAS call for computations involving multiple left low-rank bases across different levels in the tree. This property simplifies the implementation, avoids unnecessary data movement, and reduces the number of kernel launches. We present the pseudocode of our GPU factorization algorithm in \cref{a:gpuf}.

The new data structure also allows us to take advantage of an optimization of the general batched gemm kernel---gemmStridedBatched\footnote{\url{https://developer.nvidia.com/blog/cublas-strided-batched-matrix-multiply/}}---from cuBLAS when the (left) low-rank bases at the same level have the same sizes. In other words, there is a constant stride among the corresponding subblocks in $U_{\text{big}}$ and $V_{\text{big}}$ that are accessed by a batched gemm kernel. Such an optimization improves the performance significantly when the sizes of input matrices are small.

The solution stage as described in \cref{a:solve} has similar structure as the factorization stage. It is straightforward to see that the two for-loops at lines 2\&6 in \cref{a:solve} can be parallelized.
Again, we batch the BLAS and LAPACK calls from tasks at the same level. With the precomputed data from the factorization stage, the solution stage is relatively simple as described in \cref{a:gpus}.

\begin{table}
\caption{Notations for \cref{a:gpuf,a:gpus}}
\label{t:notation}
\begin{center}
\begin{tabular}{r c p{6cm} }
\toprule
superscript $\ell$ &  & block-row view of a matrix, where matrix rows are partitioned according to nodes at the $\ell$-th level in $\mathcal{T}_L$. \\ \midrule
operator $\odot$ &  & block-wise matrix multiplication between two block-row matrices (the result is a block-row matrix). \\ \midrule 
$Y^{\ell+1}$ & & shorthand for $Y_{\text{big}}^{\ell+1}(:,r\ell+1:r(\ell+1))$. \\
$V^{\ell+1}$ & & shorthand for $V_{\text{big}}^{\ell+1}(:,r\ell+1:r(\ell+1))$. \\
$\left(V^{\ell +1} \right)^*$ & & block-wise transpose of $V^{\ell +1}$. \\ \midrule
%
%
%
%
%
$\left( D_{\text{big}}^{L} \right)^{-1}$ & &   applying matrix inverse (solution of linear systems) with  block-wise LU factorizations.  \\
$\left( K_{\text{big}}^{\ell} \right)^{-1}$ & &   \\
%
\bottomrule
\end{tabular}
\end{center}
%
\end{table}

\begin{algorithm}
\caption{GPU algorithm for solving \cref{e:hxb}: factorization stage (see notations in \cref{t:notation})}
\label{a:gpuf}
\begin{algorithmic}[1]
\Require matrix $D_{\text{big}}$, matrix $U_{\text{big}}$, matrix $V_{\text{big}}$, and cluster tree $\mathcal{T}_L$.
\Ensure matrix $Y_{\text{big}}$, block-wise factorizations of matrices $D_{\text{big}}^L$, $K_{\text{big}}^0$, $K_{\text{big}}^1$, \ldots, $K_{\text{big}}^{L-1}$.
\Statex
%
%
\State $Y_{\text{big}} \gets  U_{\text{big}}$
\hfill \Comment \emph{// $Y_{\text{big}}$ overwrites $U_{\text{big}}$.}
\State \Call{{batched\_LU\_factorize\_inplace}}{$D_{\text{big}}^{L}$}
\State \textsc{{batched\_LU\_solve\_inplace:}}
\[
Y_{\text{big}}^{L} \gets \left( D_{\text{big}}^{L} \right)^{-1} \odot Y_{\text{big}}^{L}
\]
%
\For {level $\ell=L-1$ \textbf{to} $0$}
%
\State  {\textsc{batched\_gemm:}}
\[
T^{\ell +1} \gets \left(V^{\ell +1} \right)^* \odot Y^{\ell +1}
\]
\hfill \Comment \emph{// $T$ and $W$ (below) are temporary variables; see \cref{e:node1}.}
\State  {\textsc{batched\_gemm:}}
\[
W^{\ell +1} \gets \left(V^{\ell +1} \right)^* \odot Y_{\text{big}}^{\ell +1}(:,1:r\ell)
\]
\State Form matrix $K_{\text{big}}^{\ell}$ with $T^{\ell +1}$.
\hfill \Comment \emph{// See \cref{e:k} and \cref{f:dk}.}
\State \Call{{batched\_LU\_factorize\_inplace}}{$K_{\text{big}}^{\ell}$}
\State {\textsc{batched\_LU\_solve\_inplace:}}
\[
W^{\ell} \gets (K_{\text{big}}^{\ell})^{-1} \odot  W^{\ell}
\]
\State  {\textsc{batched\_gemm:}}
\[
Y_{\text{big}}^{\ell +1}(:,1:r\ell) \gets Y_{\text{big}}^{\ell +1}(:,1:r\ell) - Y^{\ell +1} \odot W^{\ell +1}
\]
\EndFor
\end{algorithmic}
\end{algorithm}

\begin{algorithm}
\caption{GPU algorithm for solving \cref{e:hxb}: solution stage (see notations in \cref{t:notation})}
\label{a:gpus}
\begin{algorithmic}[1]
\Require matrix $Y_{\text{big}}$, matrix $V_{\text{big}}$, (block-wise factorizations of) matrices $D_{\text{big}}^L$, $K_{\text{big}}^0$, $K_{\text{big}}^1$, \ldots, $K_{\text{big}}^{L-1}$, cluster tree $\mathcal{T}_L$, and right-hand side $b$.
\Ensure solution $x$.
%
\Statex
%
%
\State $x \gets b$
\hfill \Comment \emph{// $x$ overwrites $b$.}
\State \textsc{batched\_LU\_solve\_inplace:}
\[
x^{L} \gets \left( D_{\text{big}}^{L} \right)^{-1} \odot x^{L}
\]
\For {level $\ell=L-1$ \textbf{to} $0$}
%
%
 \State  \textsc{batched\_gemm:}
\[
w^{\ell +1} \gets \left(V^{\ell +1} \right)^* \odot x^{\ell +1}
\]
\hfill \Comment \emph{// $w$ is a temporary variable; see \cref{e:node3}.}
\State \textsc{batched\_LU\_solve\_inplace:}
\[
w^{\ell} \gets (K_{\text{big}}^{\ell})^{-1} \odot  w^{\ell}
\]
%
\State  \textsc{batched\_gemm:}
\[
x^{\ell +1} \gets x^{\ell +1} - Y^{\ell +1} \odot w^{\ell +1}
\]
\EndFor
\end{algorithmic}
\end{algorithm}

Notice that for the first few levels the number of nodes is small, and we empirically found that the batched gemm kernel was outperformed by launching independent gemm kernels using CUDA {streams}.
Before ending this section, we point out two variants of \cref{e:node}:
\begin{align*} \label{e:var}
&
\begin{pmatrix}
I & V_{\beta}^* Y_{\beta}  \\
V_{\alpha}^* Y_{\alpha} & I
\end{pmatrix}
\begin{pmatrix}
w_{\alpha} \\
w_{\beta}
\end{pmatrix}
=
\begin{pmatrix}
V_{\beta}^* z_{\beta} \\
V_{\alpha}^* z_{\alpha}
\end{pmatrix} \\
\text{and} \qquad
&
\begin{pmatrix}
I & V_{\alpha}^* Y_{\alpha} \\
V_{\beta}^* Y_{\beta}  & I
\end{pmatrix}
\begin{pmatrix}
w_{\beta} \\
w_{\alpha}
\end{pmatrix}
=
\begin{pmatrix}
V_{\alpha}^* z_{\alpha} \\
V_{\beta}^* z_{\beta}
\end{pmatrix}
,
\end{align*}
where the coefficient matrices have identities on the diagonals but either the right-hand side or the solution is reordered. Although \cref{e:node} is mathematically equivalent to the two alternatives, they may perform differently on a GPU. To be concrete, we typically need partial pivoting for the (batched) LU factorization (line 7 in \cref{a:gpuf}) with the formulation in \cref{e:node}, which impedes performance. The alternative formulations could circumvent this issue, but they require an extra cost of shuffling the right-hand side or the solution.


\subsection{Complexity analysis} \label{s:complexity}

Given a rank-$r$ HODLR matrix $A \in \mathbb{R}^{N \times N}$ with the underlying  cluster tree  $\mathcal{T}_L$, we assume the off-diagonal blocks in $A$ have the same rank $r$ and the diagonal blocks in $A$ have the same size $m = N / 2^L$. In practice, we usually prescribe $m$ to be a small constant independent of $N$, or equivalently we set $L = \bigO(\log(N))$. The following analysis and results can be easily extended to general cases where the ranks and sizes of diagonal blocks vary.

Let us first consider the storage of $A$ that includes
\begin{itemize}
\item
Diagonal blocks: $m^2 \times 2^L = m N$.

\item
Off-diagonal blocks ($U_{\text{big}}$ and $V_{\text{big}}$): $2 \times N \times  r \times L = 2 r N L $.
\end{itemize}
As explained earlier, the factorization of the HODLR matrix can be done in-place, which requires little extra memory. We summarize the above as the following theorem.

\begin{theorem}[Storage] \label{t:store}
The storage of the HODLR matrix $A$ and its factorization is
\[
m_f = mN + r N L = \bigO(r N \log(N)).
\]
\end{theorem}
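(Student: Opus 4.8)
The plan is to treat this as a direct counting argument: I would sum the storage contributions of the two structurally distinct parts of the factored representation and then collapse everything to the asymptotic bound. The key preliminary observation, already established by the data structure of \cref{f:cat,f:dk} and the in-place remarks following \cref{a:fact}, is that the factorization is computed in place---$Y_{\text{big}}$ overwrites $U_{\text{big}}$, the diagonal LU factors are stored over the diagonal blocks, and the $K_\gamma$ factors are tiny---so the storage of $A$ together with its factorization coincides, up to a negligible additive term, with the storage of $A$ itself. Hence it suffices to count the memory occupied by (i) the diagonal blocks, (ii) the two big bases matrices $U_{\text{big}}$ and $V_{\text{big}}$, and (iii) the $K$ matrices.

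First I would count the diagonal blocks. By \cref{d:tree} the cluster tree $\mathcal{T}_L$ has $2^L$ leaf nodes, each contributing a dense $m\times m$ block $D_\alpha$ whose LU factorization occupies the same $m^2$ entries; since $m = N/2^L$, the total is $m^2\cdot 2^L = mN$. Next I would count the off-diagonal part. By the construction in \cref{e:u}, at each of the $L$ nonroot levels the bases of the nodes at that level stack vertically into a single block-column of height $N$---because nodes at a fixed level partition $\mathcal{I}$---and width $r$; concatenating over the $L$ levels shows that each of $U_{\text{big}}$ and $V_{\text{big}}$ is an $N\times rL$ matrix, for $2rNL$ entries, which the statement records (up to the constant factor immaterial to the $\bigO$ claim) as the order-$rNL$ term. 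Finally I would check the remaining data: the LU factors of the $K_\gamma$ matrices of \cref{e:k}, one per nonleaf node, each of size $2r\times 2r$, number $2^L-1$ in total and thus contribute only $\bigO(r^2\cdot 2^L) = \bigO(r^2 N/m)$, which for fixed rank is dominated by the terms above and absorbed.

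Summing the contributions gives $m_f = mN + rNL$ together with the negligible $K$-matrix storage, and substituting the standard choice $m = \bigO(1)$ (equivalently $L = \bigO(\log N)$) yields $m_f = \bigO(rN\log N)$, as claimed. I expect no genuine obstacle here, since the result is essentially bookkeeping made possible by the in-place algorithm. The only point that warrants care is confirming that the column count of each big matrix is exactly $rL$, rather than the total number $2^{L+1}-2$ of off-diagonal blocks: this is precisely because the vertical stacking of sibling bases at a common level reuses the same $r$ columns instead of appending new ones, a fact made visible by the block-column alignment in \cref{f:cat}.
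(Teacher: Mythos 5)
Your proposal is correct and takes essentially the same approach as the paper: count the diagonal blocks ($m^2 \cdot 2^L = mN$), count the two concatenated bases matrices as $N\times rL$ each (giving the order-$rNL$ term), and invoke the in-place nature of the factorization to conclude no significant additional storage is needed. Your explicit bound $\bigO(r^2 N/m)$ on the stored $K_\gamma$ factors is a minor refinement of the paper's remark that the in-place factorization ``requires little extra memory,'' but the argument is otherwise identical.
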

Next, we consider the computational cost for factorizing matrix $A$ and summarize the results in a theorem.
\begin{itemize}
\item
Leaf level (lines 3\&4 in \cref{a:fact}):
\[
\frac{2}{3} m^3 \times 2^L + 2m^2 \times r \times L \times 2^L = \frac{2}{3} m^2 N + 2 m r N L,
\]
where factorizing an $m\times m$ matrix using the LU factorization requires $2/3m^3$ operations and solving one right-hand side using the LU factorization requires $2m^2$ operations with forward and backward substitutions.

\item
At level $\ell \, (1\le \ell \le L)$, we solve \cref{e:node1} and compute  \cref{e:node2}. Assuming the rank $r$ is small, the cost is dominated by conducting matrix-matrix multiplication\footnote{The computational cost for multiplying an $n\times k$ (real) matrix and a $k\times m$ (real) matrix is $2knm$ operations.} to form the right-hand sides in  \cref{e:node1} and the update in \cref{e:node2},
which is
\[
2  r^2 N \ell + 2 r^2 N \ell = 4 r^2 N \ell
\]
operations. Notice this cost is linear with respect to $\ell$.
\end{itemize}

\begin{theorem}[Factorization cost]
The factorization of the HODLR matrix $A$ requires the following amount of operations:
\begin{align*}
t_f & = \frac{2}{3} m^2 N + 2 m r N L + \sum_{\ell=1}^{L} 4 r^2 N \ell  \\
&=  \frac{2}{3} m^2 N + 2 m r N L + 2 r^2 N (L+L^2) \\
&= \bigO(r^2 N \log^2(N)).
\end{align*}
\end{theorem}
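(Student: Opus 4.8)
The plan is to assemble the total operation count by summing the two contributions already itemized above---the leaf-level work and the per-level work for the nonleaf levels---and then to evaluate the resulting arithmetic series and pass to the asymptotic regime stated in the theorem.

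First I would record the leaf-level cost, which the preceding discussion gives as $\frac{2}{3}m^2 N + 2mrNL$: the $2^L$ diagonal factorizations contribute $\frac{2}{3}m^3 \cdot 2^L = \frac{2}{3}m^2 N$ (using $N = m 2^L$), and applying each factored diagonal block to the $rL$ right-hand sides stored in $Y_{\text{big}}$ contributes $2m^2 \cdot rL \cdot 2^L = 2mrNL$. Next, for each level $\ell$ with $1 \le \ell \le L$, I would invoke the per-level count $4r^2 N\ell$: this is the combined cost of the two gemm's that form the right-hand sides of \cref{e:node1} and apply the update in \cref{e:node2}, each costing $2r^2 N\ell$. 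Summing over all levels then yields $t_f = \frac{2}{3}m^2 N + 2mrNL + \sum_{\ell=1}^{L} 4r^2 N\ell$, which is the first line of the statement.

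The only genuine computation is the arithmetic series: since $\sum_{\ell=1}^{L}\ell = \frac{L(L+1)}{2}$, we get $\sum_{\ell=1}^{L} 4r^2 N\ell = 2r^2 N(L + L^2)$, which produces the middle line. To reach the $\bigO$ bound I would substitute the standing assumptions $m = \bigO(1)$ and $L = \bigO(\log(N))$: the first term is $\bigO(N)$, the second $\bigO(rN\log(N))$, and the third $\bigO(r^2 N\log^2(N))$, the last of which dominates and gives the claimed $t_f = \bigO(r^2 N \log^2(N))$.

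I expect the arithmetic to be trivial; the step I would scrutinize is the justification of the per-level count $4r^2 N\ell$. The key facts are that the nodes at any fixed level induce a partition of the index set, so their block sizes sum to $N$ and collapse the per-node products $V_{\alpha}^* Y_{\text{big}}(\mathcal{I}_{\alpha}, 1:r\ell)$ and the corresponding updates into a clean $2r^2 N\ell$ per gemm, and that the $2r \times 2r$ factorizations and triangular solves of the $K_{\gamma}$ systems contribute only $\bigO(r^3)$ and $\bigO(r^3\ell)$ work per node, hence are subleading in $r$ and absorbed under the small-rank assumption already invoked in the cost derivation. Confirming this accounting against \cref{a:fact} (equivalently the batched form in \cref{a:gpuf}) is the main obstacle, but it follows directly from the block dimensions.
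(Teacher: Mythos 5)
Your proposal is correct and follows essentially the same route as the paper: the same decomposition into leaf-level LU/solve costs plus per-level gemm costs of $4r^2N\ell$, the same arithmetic-series evaluation $\sum_{\ell=1}^{L}\ell = L(L+1)/2$, and the same passage to $\bigO(r^2N\log^2(N))$ via $m=\bigO(1)$ and $L=\bigO(\log(N))$. Your added justification that the block sizes at each level sum to $N$ and that the $2r\times 2r$ factorizations and solves of the $K_\gamma$ systems are subleading is precisely what the paper compresses into the phrase ``assuming the rank $r$ is small, the cost is dominated by'' the matrix-matrix multiplications.
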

Finally, we consider the computational cost for solving an arbitrary right-hand side using the factorization of matrix $A$ and summarize the results in a theorem.
\begin{itemize}
\item
Leaf level (line 3 in \cref{a:solve}):
\[
 2m^2 \times 2^L = 2 m N,
\]
where $2^L$ is the number of leaf nodes in the cluster tree (diagonal blocks in $A$).

\item
At level $\ell \, (1\le \ell \le L)$, we solve \cref{e:node3} and compute  \cref{e:node4}. Assuming the rank $r$ is small, the cost is dominated by conducting {matrix-vector multiplication} to form the right-hand side in  \cref{e:node3} and the update in \cref{e:node4},
which is
\[
2  r N + 2 r N = 4 r N
\]
operations. Notice that this cost does not depend on $\ell$.
\end{itemize}

\begin{theorem}[Solution cost] \label{t:solve}
The solution of an arbitrary right-hand side using the factorization of $A$ requires the following amount of operations:
\begin{equation} \label{e:solution_scale}
t_s = 2 m N + \sum_{\ell=1}^{L}  4 r N  = 2 m N + 4 r N L = \bigO(r N \log(N)).
\end{equation}
\end{theorem}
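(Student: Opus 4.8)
The plan is to carry out a direct operation count of \cref{a:solve}, following the level-by-level task-graph structure already established for the solution stage and reusing the per-operation cost conventions fixed in the preceding itemization (a triangular solve against a precomputed $m\times m$ LU factorization costs $2m^2$, and multiplying an $n\times k$ matrix by a vector costs $2nk$). Because all LU factorizations are precomputed in the factorization stage, every solve in the solution stage is merely forward/backward substitution, so the bookkeeping separates cleanly into the leaf contribution (line~3) and the contributions of the internal levels $\ell=L-1,\ldots,0$ (the inner loop).

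First I would account for the leaf level. There are $2^L$ leaf nodes, and at each we apply the precomputed factorization $D_\alpha^{-1}$ to a single right-hand side $x(\mathcal{I}_\alpha)$ in place, at a cost of $2m^2$ operations. Using $N = m\,2^L$, the leaf contribution totals $2m^2 \cdot 2^L = 2mN$, which is the first term in \cref{e:solution_scale}.

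Next I would count one internal level $\ell$. For each node $\gamma$ with children $\alpha,\beta$, the work in \cref{e:node3,e:node4} is: (i) forming the reduced right-hand sides $V_\alpha^* x(\mathcal{I}_\alpha)$ and $V_\beta^* x(\mathcal{I}_\beta)$, each a matrix--vector product against an $|\mathcal{I}|\times r$ basis block; (ii) solving the $2r\times 2r$ system against $K_\gamma^{-1}$; and (iii) applying the updates $Y_\alpha w_\alpha$ and $Y_\beta w_\beta$, again matrix--vector products. The key observation is that the children of all nodes at level $\ell$ are exactly the nodes at level $\ell+1$, whose index sets partition $\mathcal{I}$ by condition~3 of \cref{d:tree}; hence summing the matrix--vector costs over the whole level telescopes to $2rN$ for step~(i) and $2rN$ for step~(iii), independent of $\ell$, giving the stated $4rN$ per level.

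Finally I would assemble $t_s = 2mN + \sum_{\ell=1}^{L} 4rN = 2mN + 4rNL$ and invoke $L = \bigO(\log N)$ together with $m$ a fixed small constant to conclude $t_s = \bigO(rN\log N)$. The only point needing a word of care---and the nearest thing to an obstacle---is step~(ii): the $2r\times 2r$ solves contribute $\bigO(r^2 2^\ell)$ per level and $\bigO(r^2 N/m)$ overall, a term dropped from the exact expression in \cref{e:solution_scale}. I would justify this explicitly: under the standing assumption that $r$ is a small constant, this term is $\bigO(N)$ and is dominated by $4rNL = \bigO(rN\log N)$, so it affects neither the leading balance nor the asymptotic rate.
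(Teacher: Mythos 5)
Your proof is correct and follows essentially the same route as the paper: counting $2m^2$ per leaf solve to get $2mN$, observing that the per-level matrix--vector work against the $V$ and $Y$ blocks sums to $4rN$ independent of $\ell$ because the children's index sets partition $\mathcal{I}$, and assembling $2mN + 4rNL = \bigO(rN\log N)$. Your explicit bound $\bigO(r^2 N/m)$ on the dropped $K_\gamma$ solves is a slightly more careful statement of what the paper covers with its ``assuming the rank $r$ is small, the cost is dominated by matrix-vector multiplication'' remark, but it is the same argument.
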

It is not a coincidence that the amount of required operations in \cref{t:solve} is twice as many as the amount of storage in \cref{t:store}. The fact is that every stored entry is touched once and is involved with two operations (a multiplication and an addition) in the solution stage.

\begin{remark}[Numerical rank]

From users' perspective, it is usually more desirable to specify a tolerance for low-rank compressions rather than the ranks directly. Consider using HODLR approximations for problems that are not highly oscillatory. When the underlying problem lies in 1D, the ranks of all off-diagonal blocks are roughly the same
and are independent of the problem size for a prescribed tolerance. As the above analysis shows, the factorization and the solution both scale nearly linearly. However, when the underlying problem is in 2D or 3D, the ranks increase with the problem size, and the asymptotic complexities deteriorate (see, e.g., Remark 5.1 in~\cite{ambikasaran2014fast}). 

\end{remark}

\subsection{Two perspectives and connections} \label{s:connection}

The HODLR format is relatively simple among other formats that approximate off-diagonal blocks of a matrix, but the described algorithms have close connections to algorithms for more complicated formats. Below, let us  review two other perspectives on factorizing a HODLR matrix and discuss their connections to related algorithms.

\paragraph{Matrix factorization} The first perspective is from a matrix factorization point of view, which initially appeared in~\cite{ambikasaran2013mathcal}. Let us illustrate this with the following example. (An interesting extension is the computation of a symmetric factorization of a HODLR matrix that is SPD~\cite{ambikasaran2014fast}, which has various applications involving covariance matrices.)

\begin{example}
Consider the HODLR matrix $A$ in \cref{f:hodlr}, where the underlying cluster tree has three levels (level 0, 1, and 2). Suppose we apply \cref{a:fact} to $A$. Equivalently, we obtain the following factorization (and its ``inverse'')
\begin{align*}
A
& =
\begin{pmatrix}
A_{2} & U_{2} V_{3}^* \\
U_{3} V_{2}^* & A_{3}
\end{pmatrix}
 =
\begin{pmatrix}
A_{2} &  \\
 & A_{3}
\end{pmatrix}
\begin{pmatrix}
I & Y_{2} V_{3}^* \\
Y_{3} V_{2}^* & I
\end{pmatrix}
\\
& =
\underbrace{
\begin{pmatrix}
A_{4}   \\
 & A_{5} \\
  & & A_{6} \\
  & & & A_{7} \\
\end{pmatrix}
}_{
A^{(3)}
}
\underbrace{
\begin{pmatrix}
I & Y_4 V_5^* \\
Y_5 V_4^* & I \\
  & & I & Y_6 V_7^* \\
  & & Y_7 V_6^* & I \\
\end{pmatrix}
}_{
A^{(2)}
} \\
 & \quad \quad \quad \underbrace{
\begin{pmatrix}
I & Y_{2} V_{3}^* \\
Y_{3} V_{2}^* & I
\end{pmatrix}
}_{
A^{(1)}
}
.
\end{align*}
Notice that the $Y$ matrices are among the outputs of \cref{a:fact}. In addition, we can apply $A^{-1}$ easily because we can apply the inverses of $A^{(1)}$, $A^{(2)}$, and $A^{(3)}$ easily. For example, we have stored the factorizations of every diagonal block in $A^{(3)}$ (line 3 in  \cref{a:fact}), and every $2\times 2$ diagonal block in $A^{(2)}$ can be inverted according to \cref{e:wood}, where the factorization of a small matrix has been stored (line 9 in  \cref{a:fact}). It should not be surprising that the algorithm of applying $A^{-1}$ to a vector $b$ is just \cref{a:solve}. We summarize the discussion into the following:

\begin{theorem}
\cref{a:fact} and \cref{a:solve} are equivalent to computing a matrix factorization of a HODLR matrix and applying the inverse of the factorization to a vector, respectively.
\end{theorem}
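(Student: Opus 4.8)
The plan is to prove both equivalences by induction on the number of levels $L$ of the cluster tree, with the single-step block factorization serving as the inductive engine. The starting observation is the elementary identity, valid for the sibling pair $\alpha,\beta$ at level $1$ in \cref{e:axb_blk},
\begin{equation*}
\begin{pmatrix} A_{\alpha} & U_{\alpha} V_{\beta}^* \\ U_{\beta} V_{\alpha}^* & A_{\beta} \end{pmatrix}
=
\begin{pmatrix} A_{\alpha} & \\ & A_{\beta} \end{pmatrix}
\begin{pmatrix} I & Y_{\alpha} V_{\beta}^* \\ Y_{\beta} V_{\alpha}^* & I \end{pmatrix},
\end{equation*}
which holds precisely because $Y_{\alpha}=A_{\alpha}^{-1}U_{\alpha}$ and $Y_{\beta}=A_{\beta}^{-1}U_{\beta}$ solve the auxiliary systems in \cref{e:recursion}; the check is a one-line block multiplication using $A_{\alpha}Y_{\alpha}=U_{\alpha}$ and $A_{\beta}Y_{\beta}=U_{\beta}$. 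I would then fix the target factorization $A=A^{(L+1)}A^{(L)}\cdots A^{(1)}$, where $A^{(L+1)}$ is the block-diagonal matrix of leaf blocks $D_{\alpha}$, and for $1\le\ell\le L$ the factor $A^{(\ell)}$ is block-diagonal with one correction block $\left(\begin{smallmatrix} I & Y_{\alpha}V_{\beta}^* \\ Y_{\beta}V_{\alpha}^* & I \end{smallmatrix}\right)$ per sibling pair $(\alpha,\beta)$ at level $\ell$, exactly as displayed in the preceding example.

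For the factorization claim, the inductive step partitions $A$ at level $1$, applies the identity above, and invokes the induction hypothesis on the two diagonal blocks $A_{\alpha},A_{\beta}$, which are themselves $(L{-}1)$-level HODLR matrices. The block-diagonal combination of their factorizations supplies $A^{(L+1)},\dots,A^{(2)}$, while the level-$1$ correction is $A^{(1)}$. The one fact that must be verified for this to be the factorization \emph{computed by} \cref{a:fact} is that the quantity the algorithm stores as $Y_{\alpha}$ equals $A_{\alpha}^{-1}U_{\alpha}$. Here I would use that \cref{a:fact} treats the off-diagonal bases $U_{\alpha}$ as additional right-hand sides carried alongside the vector unknowns: the leaf loop applies $D_{\alpha}^{-1}$ to them, and the subsequent level loops apply the corrections $K_{\gamma}^{-1}$ through \cref{e:node1,e:node2}; by the induction hypothesis (equivalently, by the correctness of the recursion in \cref{t:recursion}) this sequence applies $A_{\alpha}^{-1}$ to $U_{\alpha}$, so the stored $Y_{\alpha}$ is exactly $A_{\alpha}^{-1}U_{\alpha}$.

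For the solution claim, I would write $A^{-1}=\bigl(A^{(1)}\bigr)^{-1}\bigl(A^{(2)}\bigr)^{-1}\cdots\bigl(A^{(L+1)}\bigr)^{-1}$ and show that \cref{a:solve} evaluates $x=A^{-1}b$ by applying these inverse factors right-to-left: the leaf loop applies $\bigl(A^{(L+1)}\bigr)^{-1}$ via the stored LU factorizations of the $D_{\alpha}$, and the level loop from $\ell=L-1$ down to $0$ applies $\bigl(A^{(L)}\bigr)^{-1},\dots,\bigl(A^{(1)}\bigr)^{-1}$ in turn. Each correction factor is inverted blockwise using the Woodbury identity \cref{e:wood} with the stored factorization of $K_{\gamma}$, which is precisely the computation in \cref{e:node3,e:node4}; hence the output of \cref{a:solve} is $A^{-1}b$. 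It remains to note that \cref{a:fact} is only the breadth-first unrolling of the depth-first recursion: because all tasks at a common level act on disjoint index blocks and every parent task depends only on its already-completed children, the unrolled ordering produces identical results.

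The main obstacle I anticipate is the bookkeeping that certifies the equality ``stored $Y_{\alpha}=A_{\alpha}^{-1}U_{\alpha}$'' at every interior node, together with the column indexing of the concatenated data structure. One must track that the columns $1{:}r\ell$ of $Y_{\text{big}}(\mathcal{I}_{\gamma},\cdot)$ updated at level $\ell$ are exactly the ancestor bases that still require the correction $\bigl(A^{(\ell)}\bigr)^{-1}$, while the columns $r\ell{+}1{:}r(\ell{+}1)$ hold the freshly completed $Y$ block for node $\gamma$; confirming that this indexing and the concatenation in \cref{f:dk} faithfully realize the abstract factors $A^{(\ell)}$ is routine but is where all the care is needed. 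Everything else---the block identity, the Woodbury inversion, and the independence of same-level tasks---is immediate from results already established.
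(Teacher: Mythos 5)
Your proposal is correct and takes essentially the same route as the paper: the paper establishes the theorem by exhibiting the level-by-level factorization $A=A^{(3)}A^{(2)}A^{(1)}$ on the three-level example, identifying the $Y$ matrices and the stored LU factorizations of the $D_\alpha$ and $K_\gamma$ as outputs of \cref{a:fact}, and inverting each correction block via the Woodbury identity \cref{e:wood} to recognize \cref{a:solve} as the application of $A^{-1}$ --- exactly your factorization $A=A^{(L+1)}\cdots A^{(1)}$ specialized to $L=2$. The difference is one of rigor rather than method: where the paper ``summarizes the discussion'' of its example into the theorem, you supply the general induction on $L$ and make explicit the invariant that \cref{a:fact} stores $Y_\alpha=A_\alpha^{-1}U_\alpha$ at every node (via \cref{t:recursion} applied to the concatenated right-hand sides), which the paper leaves implicit.
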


Notice that the above matrix factorization of $A$ naturally leads to an efficient algorithm for evaluating the determinant of $A$. Observe that
\[
\text{det}(A) = \text{det}(A^{(1)}) \text{det}(A^{(2)}) \text{det}(A^{(3)}),
\]
where $\text{det}(A^{(3)})$ can be readily obtained with the LU factorizations of its diagonal blocks, the determinant of every $2\times 2$ diagonal block in $A^{(2)}$ and $A^{(1)}$ can be evaluated efficiently with the stored factorizations according to Sylvester’s determinant theorem~\cite{ambikasaran2015fast}.
\end{example}
%
%
%
%

\paragraph{Extended sparse linear system} The other perspective is related to sparse matrix algebra, where we embed a HODLR matrix into a larger sparse matrix~\cite{aminfar2016fast}. To be more specific, the original dense problem \cref{e:hxb} is equivalent to a larger sparse linear system, and we solve the sparse problem with straightforward Gaussian elimination to obtain the solution of the original problem. The power of this approach is that it also generalizes to solving problems involving more complicated formats of hierarchical low-rank compression. Let us illustrate the main idea through the following example.

\begin{example}
Consider solving a dense linear system
\[
\begin{pmatrix}
A_{2} & U_{2} V_{3}^* \\
U_{3} V_{2}^* & A_{3}
\end{pmatrix}
\begin{pmatrix}
x_{2} \\
x_{3}
\end{pmatrix}
=
\begin{pmatrix}
b_{2} \\
b_{3}
\end{pmatrix}
.
\]
With two auxiliary variables $w_{3} = V_{2}^* x_{2}$ and $w_{2} = V_{3}^* x_{3}$ (the subscripts of $w$'s are chosen for the convenience of the following presentation), we rewrite the above dense linear system as the following block-sparse linear system:
\[
\begin{pmatrix}
A_{2} & & U_{2}  \\
& A_{3} & & U_{3}  \\
V_{2}^* & & & - I \\
& V_{3}^* &  - I
\end{pmatrix}
\begin{pmatrix}
x_{2} \\
x_{3} \\
w_{2} \\
w_{3}
\end{pmatrix}
=
\begin{pmatrix}
b_{2} \\
b_{3} \\
0 \\
0
\end{pmatrix}
.
\]
The beauty is that we can solve this sparse linear system with Gaussian elimination in a straightforward manner: we factorize the block-sparse matrix with the block-LU factorization and then solve with block-forward and block-backward substitutions. To be concrete, we first apply two steps of block Gaussian elimination (for $x_2$ and $x_3$) and obtain the resulting linear system involving the resulting Schur complement
\[
\begin{pmatrix}
V_{2}^* Y_{2} &  I \\
I  & V_{3}^* Y_{3}
\end{pmatrix}
\begin{pmatrix}
w_{2} \\
w_{3}
\end{pmatrix}
=
\begin{pmatrix}
V_{2}^* z_{2} \\
V_{3}^* z_{3}
\end{pmatrix}
,
\]
where $Y_{2} = (A_{2})^{-1} U_{2}$, $Y_{3} = (A_{3})^{-1} U_{3}$, $z_{2} = (A_{2})^{-1} b_{2}$, and $z_{3} = (A_{3})^{-1} b_{3}$. Readers may recognize  this small linear system as \cref{e:node}. After solving the above system, we get the original solution via forward and backward substitutions, which  turns out to be exactly the same as \cref{e:solution}:
\[
\begin{pmatrix}
x_{2} \\
x_{3}
\end{pmatrix}
=
\begin{pmatrix}
z_{2} \\
z_{3}
\end{pmatrix}
-
\begin{pmatrix}
Y_{2} w_{2} \\
Y_{3} w_{3}
\end{pmatrix}
.
\]
\end{example}

This approach extends to solving linear systems involving HSS matrices\footnote{If the low-rank bases in a HODLR matrix are \emph{nested} (the bases of a nonleaf node can be constructed from those of its two children), then the HODLR matrix is an HSS matrix.}. It is shown in~\cite{ambikasaran2013fast} (Section 7.2.3) that applying Gaussian elimination to an extended sparse linear system introduces \emph{no} fill-in with an appropriate ordering. Alternatively, we can feed an extended sparse linear system directly to a highly optimized sparse direct solver as proposed by Ho and Greengard~\cite{ho2012fast} for computational efficiency and stability.

Both the HODLR and the HSS formats rely on the so-called \emph{weak admissibility}, and related algorithms are relatively simple. By contrast, the compression format associated with the FMM employs the \emph{strong admissibility}, and solving a linear system involving such a matrix is still an active research question. Recent advances~\cite{ambikasaran2014inverse,coulier2017inverse,takahashi2020parallelization} demonstrate that the approach of creating an extended sparse linear system is still viable as long as we compress fill-in blocks during Gaussian elimination. It turns out that this improvement can also be applied to solve large \emph{sparse} linear systems arising from the discretization of linear elliptic partial differential equations that are not highly indefinite~\cite{pouransari2017fast,sushnikova2018compress,chen2018distributed,chen2019robust}.

\section{Numerical Results}

We benchmark the speed and the accuracy of our GPU solver and compare it against existing methods in the literature.
In \cref{s:kernel}, we apply our GPU solver to kernel matrices and compare to \hodlrlib~\cite{ambikasaran2019hodlrlib}.
In \cref{s:laplace,s:helmholtz}, we apply our HODLR solver to linear systems coming from the discretization of  BIEs with a Laplace double-layer potential and a Helmholtz potential, respectively. We compare to the block-sparse solver proposed by Ho and Greengard~\cite{ho2012fast}, which can leverage state-of-the-art sparse direct solvers.
%
The machine where our numerical experiments were performed has
\begin{itemize}

\item
two Intel Xeon Gold 6254 CPUs, each with 18 cores at 3.10 GHz (peak performance $\approx$ 1.27 TFlop/s),

\item
an NVIDIA GPU that is a Tesla V100 GPU with 32 GB of memory (peak performance $\approx$ 7 TFlop/s).

\item
{a PCIe 3.0 $\times 16$ between the CPU and the GPU that can deliver up to 15.75 GB/s.}

\end{itemize}
Our GPU code was compiled with the NVIDIA compiler {nvcc} (version 11.3.58) and linked with the cuBLAS library (version 11.4.2.10064) on the Linux OS (5.4.0-72-generic.x86\_64). Calculations are performed with double-precision floating-point arithmetic unless stated otherwise. Notations that we use to report results are shown in \cref{t:notation2}, and the timings were taken as the average of five consecutive runs.
%

\begin{table}
\caption{Notations for numerical results}
\label{t:notation2}
\begin{center}
\begin{tabular}{r c p{7cm} }
\toprule
$N$ && problem size/matrix size/degrees of freedom/number of points \\
$t_f$ && factorization time in seconds \\
$t_s$ && solution time for a random right-hand side in seconds \\
mem && memory of the factorization in gigabytes (GB) \\
relres && relative residual of a solution $\tilde{x}$, i.e., $ \|b - A \tilde{x} \| / \|b\|$ \\
\bottomrule
\end{tabular}
\end{center}
\end{table}

\subsection{{Kernel matrix}} \label{s:kernel}

Consider solving \cref{e:axb}, where matrix $A$ is generated
%
%
from the Rotne-Prager-Yamakawa (RPY) tensor kernel~\cite{rotne1969variational,yamakawa1970transport}, which is frequently used to model the hydrodynamic interactions in simulations of Brownian dynamics. Given a set of points $\{{y}_i\}_{i=1}^N$, the  kernel matrix $A$ is defined as
\begin{equation} \label{e:rpy}
A_{i,j} =
\left\{
\begin{array}{l c }
\frac{kT}{8 \pi \eta |r|} \left[ I + \frac{r \bigotimes r}{|r|^2}  + \frac{2a^2}{3|r|^2} ( I - 3\frac{r \bigotimes r}{|r|^2} ) \right] & \text{ if } r \ge 2a, \\
\frac{kT}{6 \pi \eta a} \left[ (1 - \frac{9}{32} \frac{|r|}{a}) I + \frac{3}{32a} \frac{r \bigotimes r}{|r|} \right] & \text{ if } r < 2 a,
\end{array}
\right.
\end{equation}
where $r = y_i - y_j$ and $|r|$ denotes its two norm.

We approximated matrix $A$ with a HODLR matrix and obtained an approximate solution for \cref{e:axb}. In the following, we compare our GPU solver to the \hodlrlib{} library~\cite{ambikasaran2019hodlrlib}, which is a  C++ code parallelized with OpenMP for shared-memory multicore architectures. To be consistent with the benchmark of \hodlrlib{}\footnote{\url{https://hodlrlib.readthedocs.io/en/latest/benchmarks.html}}, we randomly generated $\{{x}_i\}_{i=1}^N$ with a uniform distribution over $[-1,1]$, and set $k = T= \eta = 1$, $a = |r|_{\min}/2$ with $|r|_{\min}$ being the minimum distance among all pairs of $x_i$ and $x_j$. We compiled \hodlrlib{} with g++ 7.5.0 and linked it with the (sequential) Intel MKL library of version 20.0. Timing results of \hodlrlib{} were obtained on two Intel Xeon CPUs of 36 cores combined.


{We constructed HODLR approximations on the CPUs, where diagonal blocks have size $64 \times 64$ and off-diagonal blocks are compressed using the LowRank::rookPiv() function, an approximate partial-pivoted LU, in \hodlrlib{}.} For our GPU solver, we  copied data including $D_{\text{big}}$, $U_{\text{big}}$, and $V_{\text{big}}$ to the GPU, {where we typically used a significant portion of the bandwidth, around 12 GB/s, in our experiments.}

\begin{table}
    \caption{\em Results for solving \cref{e:axb} with the RPY kernel defined in \cref{e:rpy}. The low-rank approximation accuracy is prescribed to be $10^{-12}$.}
    \label{t:hodlrlib}
    \centering
    \begin{tabular}{c|cc|cc|cc} \toprule
	\multirow{2}{*}{$N$} & \multicolumn{2}{c|}{\hodlrlib{}} & \multicolumn{2}{c|}{GPU Solver} & \multirow{2}{*}{mem} & \multirow{2}{*}{relres}  \\
	& $t_f$ & $t_s$ & $t_f$ & $t_s$ & & \\ \midrule
	\rowcolor{Gray}
	$2^{17} = 131,072$ & 1.47 & 0.22 & 7.39e-2 & 4.37e-3 & 0.88 & 1.68e-11  \\
	$2^{18} = 262,144$ & 5.09 & 0.61 & 1.81e-1 & 7.43e-3 & 1.93 & 2.57e-9  \\
	\rowcolor{Gray}
	$2^{19} = 524,288$ & 10.9 & 1.26 & 3.86e-1 & 1.27e-2 & 4.23 & 5.28e-11 \\
	$2^{20} = 1,048,576$ & 23.1 & 2.76 & 7.75e-1 & 2.12e-2 & 8.94 & 1.32e-9  \\
	\rowcolor{Gray}
	$2^{21} = 2,097,152$ & 51.7 & 5.42 & 1.89e-0 & 4.23e-2 & 19.2 & 1.10e-9  \\
	\bottomrule
    \end{tabular}
\end{table}

\begin{figure}
\centering
\begin{subfigure}{0.24\textwidth}
\centering
\includegraphics[width=\textwidth]{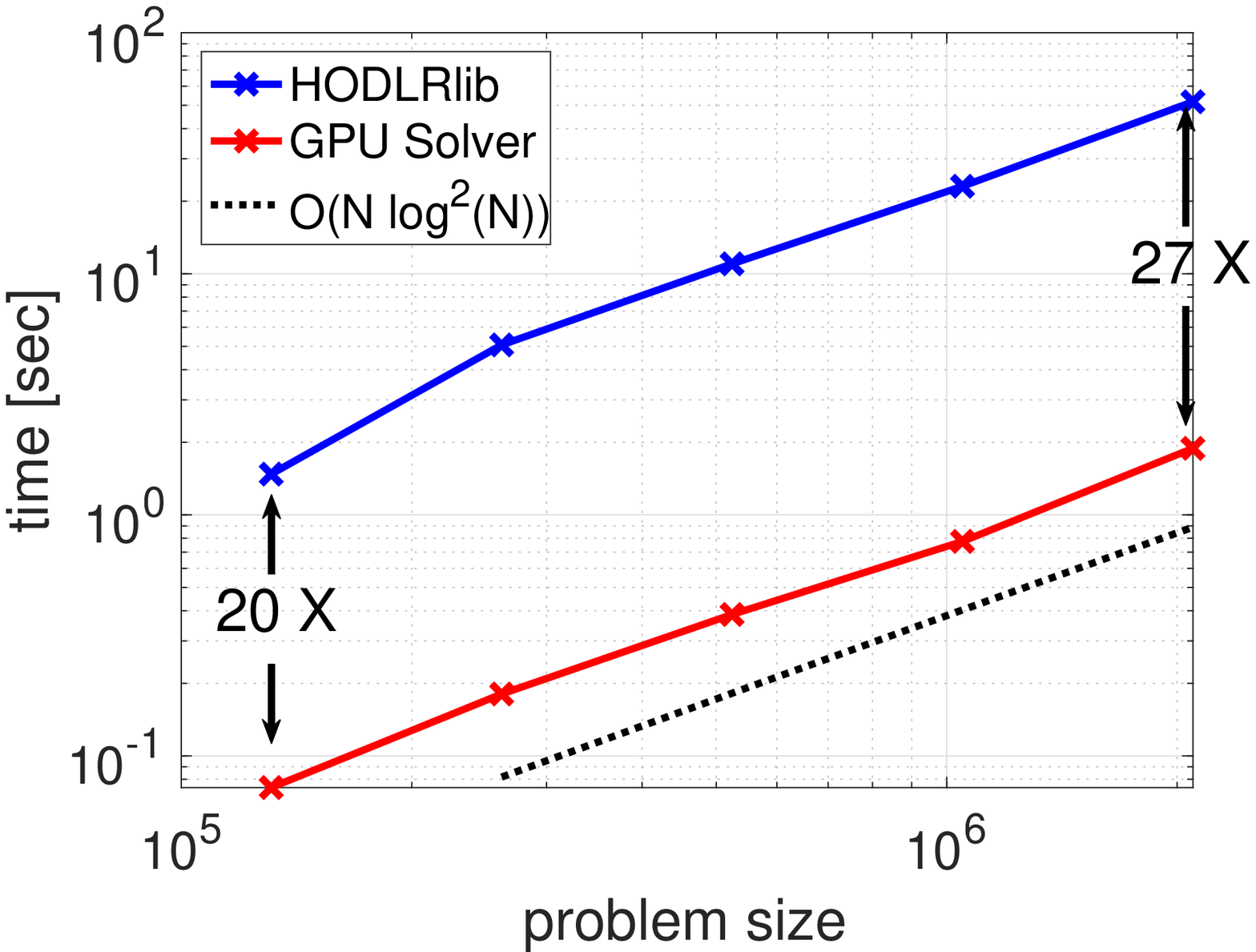}
\caption{Factorization time}
\end{subfigure}
\begin{subfigure}{0.24\textwidth}
\centering
\includegraphics[width=\textwidth]{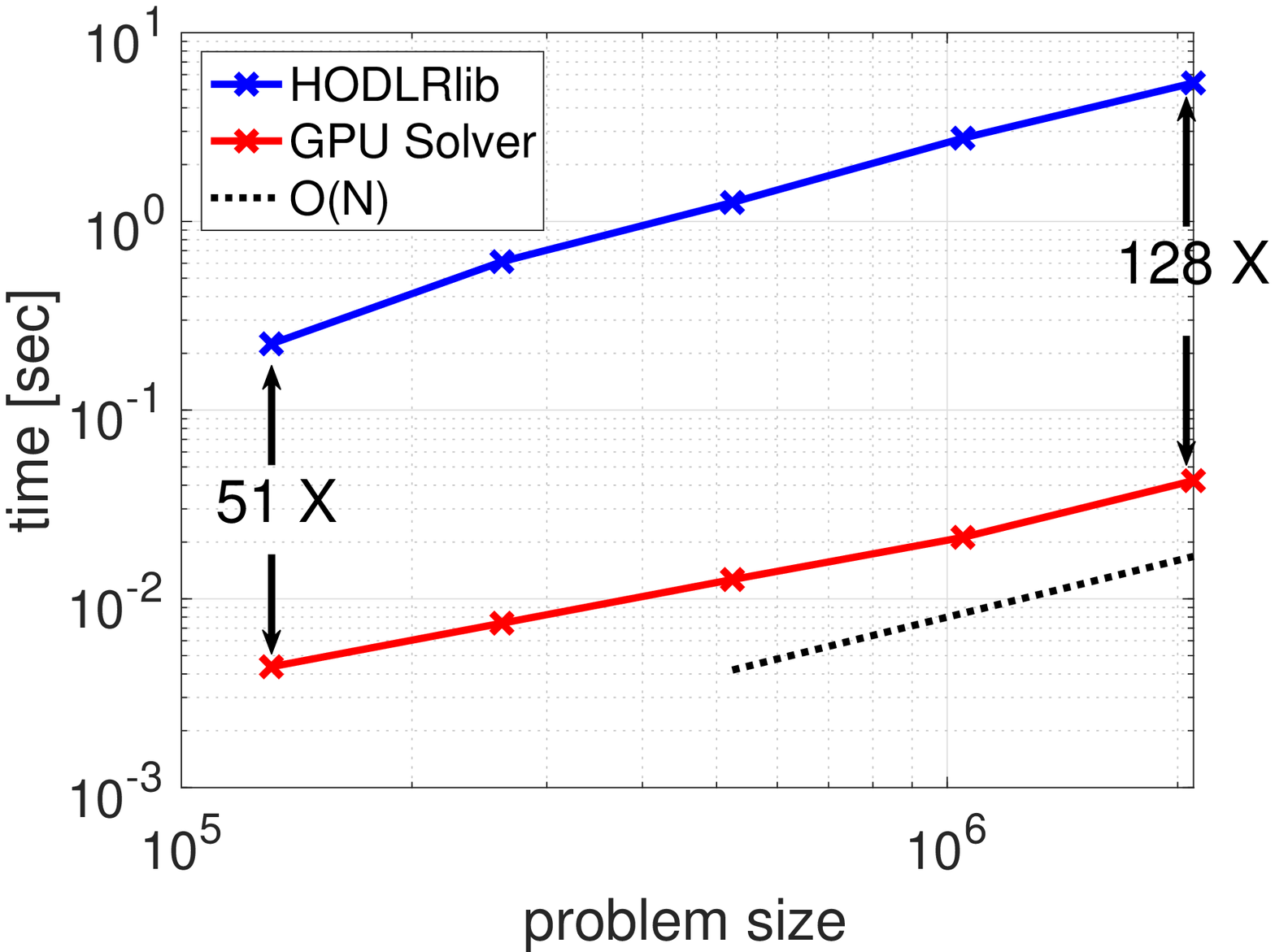}
\caption{Solution time}
\end{subfigure}
\caption{Comparison between (1) the \hodlrlib{} library~\cite{ambikasaran2019hodlrlib} running on two Intel Xeon Gold 6254 18-core CPUs and (2) our GPU solver running on an NVIDIA Tesla V100 GPU.}
\label{f:holderlib}
\end{figure}

With a prescribed accuracy of $10^{-12}$ for low-rank compression, we obtained the results in \cref{t:hodlrlib}, where the factorization time and the solution time are plotted in  \cref{f:holderlib}. From these results, we make two observations. First, the factorization time, the solution time, and the memory footprint all scale nearly linearly as the theory in \cref{s:complexity} predict. The increase of the solution time is slower than $N\log(N)$ as given in \cref{e:solution_scale} and is close to being linear. The reason is that the GPU utilization increases as the problem size $N$ increases.
 Second, our GPU solver achieved significant speedups against \hodlrlib{}. In particular, the speedup becomes larger as the problem size increases, and the acceleration of the solution time is larger than that of the factorization time. As mentioned earlier, the parallelization in \hodlrlib{} is only across nodes at the same level (no parallelization within a tree node), while our GPU solver also employs parallelization inside every tree node. For $N=2^{21}$, the factorization and the solution of our GPU solver achieved 878 Gflop/s and 119 Gflop/s, respectively.




\subsection{Laplace equation on an exterior domain} \label{s:laplace}

\begin{table*}[t]
    \caption{\em Results for solving \cref{e:laplace_int} discretized with a 2nd-order quadrature. In (b), all calculations were performed in single precision except for the sequential block-sparse solver (Matlab backslash for sparse matrices does not support single precision).}
    \label{t:bie_dr}
    \begin{subtable}[h]{\textwidth}
    \centering
    \begin{tabular}{c|ccc|ccc|ccc|ccc|c} \toprule
	\multirow{2}{*}{$N$} & \multicolumn{3}{c|}{Serial HODLR Solver} & \multicolumn{3}{c|}{Serial Block-Sparse Solver} & \multicolumn{3}{c|}{Parallel Block-Sparse Solver} & \multicolumn{3}{c|}{GPU HODLR Solver} & \multirow{2}{*}{relres} \\
	& $t_{f}$ & $t_s$ & mem & $t_{f}$ & $t_s$ & mem & $t_{f}$ & $t_s$ & mem & $t_f$ & $t_s$ & mem & \\ \midrule
	\rowcolor{Gray}
	$2^{18} = 262,144$ & 4.51e+1 & 5.93e-1 & 1.09 & 2.87e+0 & 1.33e-1 & 0.57 & 7.03e+0 & 1.85e-2 & 3.56 & 6.94e-2 & 4.87e-3 & 1.09 & 2.10e-9 \\
	$2^{19} = 524,288$ & 9.73e+1 & 1.05e-0 & 2.25 & 5.88e+0 & 2.86e-1 & 1.14 & 1.37e+1 & 3.74e-2 & 7.08 & 1.40e-1 & 8.19e-3 & 2.25 & 7.13e-9 \\
	\rowcolor{Gray}
	$2^{20} = 1,048,576$ & 2.20e+2 & 2.18e-0 & 4.63 & 1.21e+1 & 5.09e-1 & 2.28 & 2.89e+1 & 8.30e-2 & 14.2 & 2.90e-1 & 1.28e-2 & 4.63 & 5.60e-9 \\
	$2^{21} = 2,097,152$ & 4.76e+2 & 4.99e-0 & 9.46 & 2.35e+1 & 1.00e-0 & 4.56 & 6.20e+1 & 1.82e-1 & 28.6 & 6.10e-1 & 2.40e-2 & 9.46 & 7.82e-9 \\
	\rowcolor{Gray}
	$2^{22} = 4,194,304$ & 1.05e+2 & 9.81e-0 & 19.3 & 4.90e+1 & 2.29e-0 & 9.15 & 1.29e+2 & 5.18e-1 & 56.9 & 1.25e-0 & 4.61e-2 & 19.3 & 1.31e-8 \\
    \bottomrule
    \end{tabular}
    \caption{\em High-accuracy solver}
    \label{t:bie_dr_acc10}
    \end{subtable}
    \begin{subtable}[h]{\textwidth}
    \centering
    \begin{tabular}{c|ccc|ccc|ccc|ccc|cc} \toprule
	\multirow{2}{*}{$N$} & \multicolumn{3}{c|}{Serial HODLR Solver} & \multicolumn{3}{c|}{Serial Block-Sparse Solver} & \multicolumn{3}{c|}{Parallel Block-Sparse Solver} & \multicolumn{3}{c|}{GPU HODLR Solver} & \multirow{2}{*}{relres} \\
	& $t_f$ & $t_s$ & mem & $t_f$ & $t_s$ & mem & $t_f$ & $t_s$ & mem & $t_f$ & $t_s$ & mem & \\ \midrule
	\rowcolor{Gray}
	$2^{18} = 262,144$ & 1.41e+0 & 3.07e-1 & 0.27 & 1.66e+0 & 8.44e-2 & 0.35 & 4.50e+1 & 9.80e-3 & 2.40 & 1.74e-2 & 2.66e-3 & 0.27 & 3.13e-5 \\
	$2^{19} = 524,288$ & 2.95e+0 & 5.82e-1 & 0.55 & 3.32e+0 & 1.68e-1 & 0.69 & 9.42e+1 & 1.77e-2 & 4.79 & 3.39e-2 & 3.92e-3 & 0.55 & 1.49e-4 \\
	\rowcolor{Gray}
	$2^{20} = 1,048,576$ & 5.88e+0 & 1.16e-0 & 1.09 & 6.55e+0 & 3.42e-1 & 1.39 & 1.99e+1 & 3.71e-2 & 9.56 & 5.79e-2 & 6.48e-3 & 1.09 & 7.20e-5 \\
	$2^{21} = 2,097,152$ & 1.21e+1 & 2.48e-0 & 2.13 & 1.32e+1 & 6.89e-1 & 2.77 & 3.86e+1 & 7.83e-2 & 19.1 & 1.29e-1 & 1.09e-2 & 2.13 & 6.11e-4 \\
	\rowcolor{Gray}
	$2^{22} = 4,194,304$ & 2.47e+1 & 5.40e-0 & 4.26 & 2.79e+1 & 1.36e-0 & 5.55 & 8.49e+1 & 2.04e-1 & 38.3 & 2.70e-1 & 2.05e-2 & 4.26 & 2.07e-4 \\
	$2^{23} = 8,388,608$ & 5.03e+1 & 1.08e+1 & 8.45 & 5.81e+1 & 2.87e-0 & 11.1 & 1.73e+2 & 3.99e-1 & 76.2 & 4.26e-1 & 4.06e-2 & 8.45 & 4.04e-4 \\
	\rowcolor{Gray}
	$2^{24} = 16,777,216$ & 1.19e+2 & 1.93e+1 & 17.0 & 1.18e+2 & 6.23e-0 & 23.1 & 3.77e+2 & 7.83e-1 & 152 & 8.58e-1 & 8.38e-2 & 17.0 & 7.12e-4 \\
    \bottomrule
    \end{tabular}
    \caption{\em Low-accuracy solver}
    \label{t:bie_dr_acc6}
    \end{subtable}
\end{table*}

\begin{figure}
\centering
\includegraphics[width=0.24\textwidth]{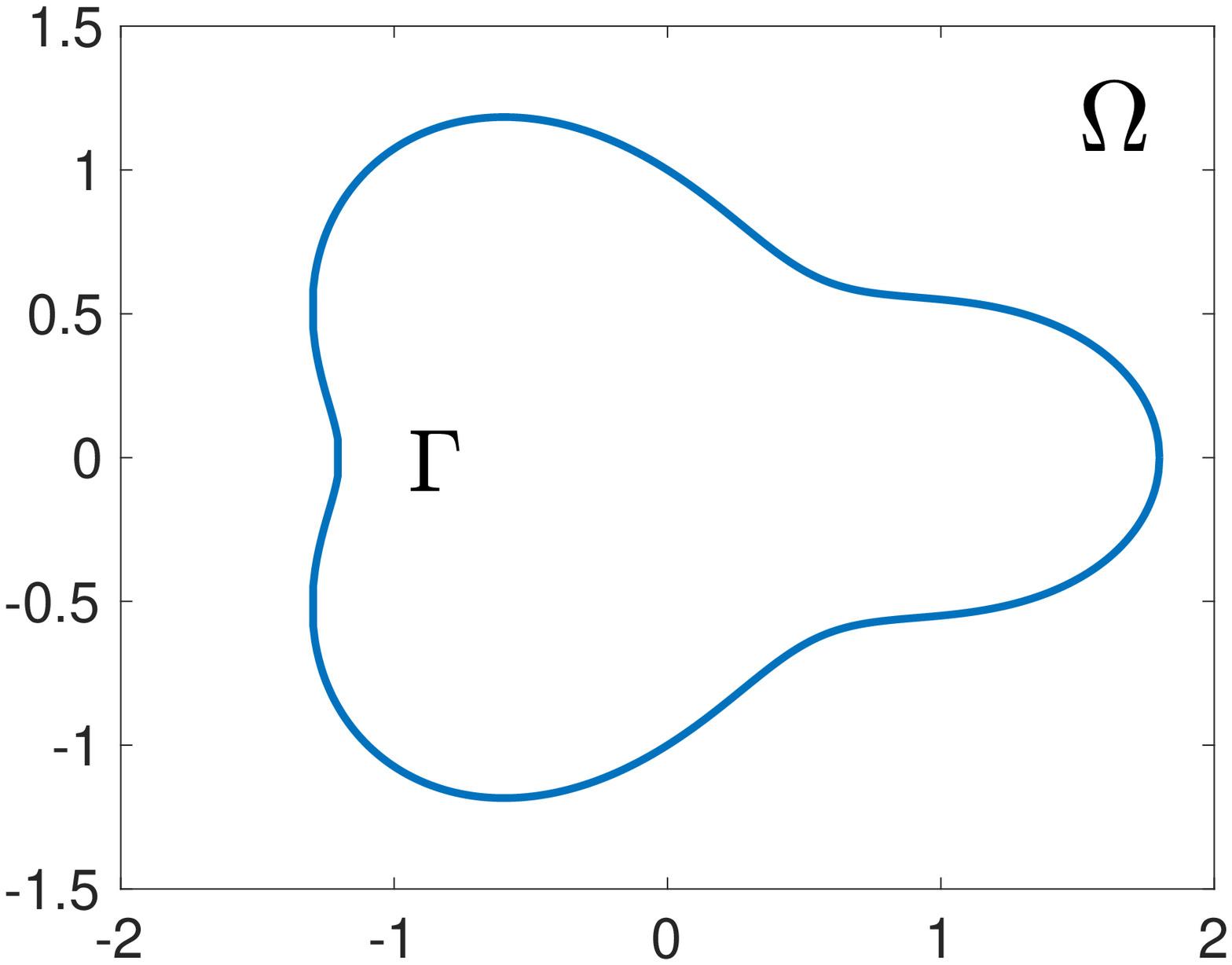}
\caption{A smooth contour $\Gamma$ on the {plane}. The problem domain $\Omega$,  where PDEs \cref{e:laplace,e:helmholtz} are defined, is \emph{exterior} to $\Gamma$.}
\label{f:contour}
\end{figure}

We next consider a boundary integral equation obtained by rewriting the BVP
\begin{equation} \label{e:laplace}
\left\{
\begin{array}{cl}
- \Delta u(x) = 0 & \text{ in } \Omega, \\
u(x) = f(x) & \text{ on } \Gamma,
\end{array}
\right.
\end{equation}
where $\Omega$ is the infinite domain that is exterior to the smooth contour $\Gamma$ shown in \cref{f:contour}.
To ensure a unique solution that is physically meaningful, we also require that there exists a real number $Q \in \mathbb{R}$ such that
\begin{equation} \label{e:laplace_inf}
\lim_{|x| \to \infty} \left( u(x) + \frac{Q}{2 \pi} \log |x| \right) = 0.
\end{equation}
We reformulate \cref{e:laplace} as
\begin{equation} \label{e:laplace_int}
\frac{1}{2} \sigma(x) + \int_{\Gamma} \left( d(x,y) - \frac{1}{2 \pi} \log|x-z| \right) \sigma(x) d s(y) = f(x),
\end{equation}
where 
$$
d(x,y) = \frac{n(y) \cdot (x-y)}{2\pi|x-y|^{2}}
$$
is the ``double layer kernel'' associated with the Laplace equation, where $n(y)$ is a normal vector at a point $y \in \Gamma$, and
where $z$ is a fixed point in the interior of $\Gamma$~\cite{kress1989linear,mclean2000strongly}. 
We chose $z$ to be the origin and discretized \cref{e:laplace} using the Trapezoidal rule.
To solve the resulting linear system, {we constructed a HODLR approximation using the proxy surface technique (see, e.g., \cite[Chapter~17]{martinsson2019fast}) on a CPU and copied the HODLR matrix to the GPU.}

For comparison, we implemented the block-sparse solver by Ho and Greengard~\cite{ho2012fast}. {The block-sparse solver builds  an HSS approximation~\cite{gillman2012direct} in a block sparse format and employs a sparse direct solver to solve the block sparse matrix.} This approach takes advantage of the stability and the efficiency of state-of-the-art sparse direct solvers. We refer interested readers to~\cite{ho2012fast} for comparisons between the block-sparse solver and an iterative solver employing the fast multipole method. For our problems, we empirically found that the Matlab backslash (calling UMFPACK~\cite{davis1997unsymmetric,davis2004algorithm}) and the MKL PARDISO solver~\cite{schenk2000efficient,schenk2004solving} performed best on a single core and on all 36 cores of two Intel Xeon CPUs, respectively\footnote{For UMFPACK, we used the [L,U,p] = lu(A,`vector') routine in Matlab, which applied only row-permutation on the input sparse matrix and avoided the overhead of computing a column permutation. {For parallel performance on the two Intel Xeon Gold 6254 CPUs, we compared several popular sparse direct solvers for our matrices}, and our conclusion is consistent with observations in the literature (see, e.g.,~\cite{kwack2016performance}).}. We refer to them as the sequential and the parallel block-sparse solvers.

\cref{t:bie_dr} shows the comparison between the block-sparse solver and our GPU solver, where the factorization time and the solution time are plotted in \cref{f:bie_dr}. {In practice, it is not necessary to use as many discretization points for \cref{e:laplace_int} on the smooth contour $\Gamma$. Here, our purpose is to benchmark the speed and the accuracy of our GPU solver.} From the results, we make the following observations. First, the scaling of the factorization, the solution time, and the memory footprint of our GPU solver increase nearly linearly with the problem size.

Second, our GPU solver achieved significant speedups compared to the block-sparse solver, as shown in \cref{f:bie_dr}. Although the sequential HODLR solver is slower than the sequential block-sparse solver, it is well understood that parallelizing a sparse direct solver, which the block-sparse solver relies on, is challenging~\cite{swirydowicz2021linear}. In fact, the parallel factorization stage, which consists of a symbolic factorization and a numerical factorization, is slower than the sequential counterpart. What happened here was that we did not compute any fill-in reducing ordering in the sequential method (the block-sparse matrix has a special structure that natural ordering turns out working well), but the parallel method spent a lot of time (and storage) in symbolic factorization to find parallelism (so the subsequent numerical factorization and the solution can be accelerated). Compared to the sequential method, the parallel block-sparse solver also consumed more memory from the analysis in the symbolic factorization phase that the solver needs in the numerical factorization and solve phases.

Third, one advantage of our method is that the accuracy is tunable. With highly accurate low-rank compressions, we obtained fast direct solvers as shown in \cref{t:bie_dr_acc10}. With reasonably accurate low-rank compressions, we obtained results in \cref{t:bie_dr_acc6}, where we were able to take advantage of single-precision floating-point arithmetic. {The use of single-precision calculations accelerated the factorization and the solution by approximately $2\times$, and it also reduced the memory footprint by $2\times$.}


\begin{figure}
\centering
\begin{subfigure}{0.24\textwidth}
\centering
\includegraphics[width=\textwidth]{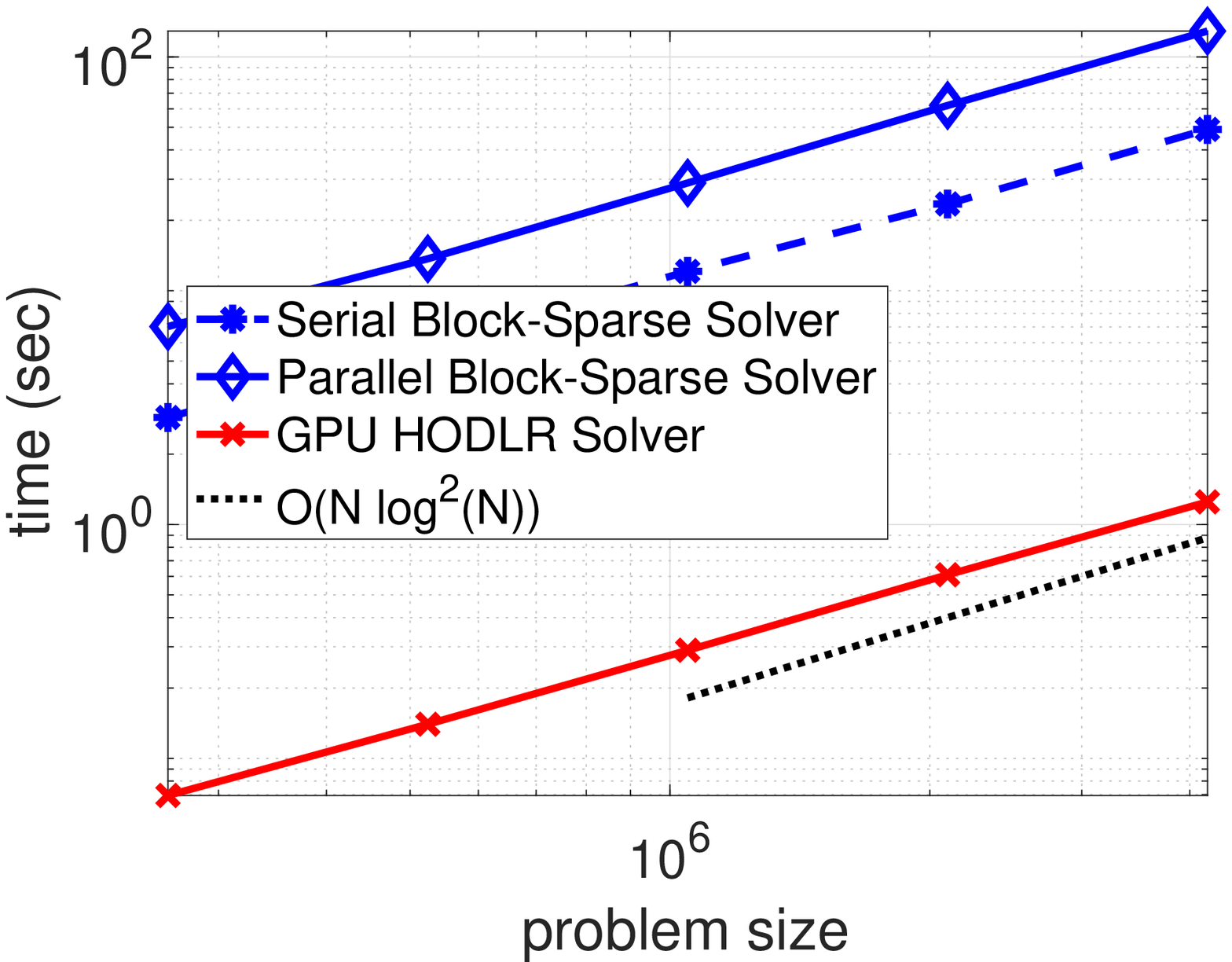}
\caption{High-accuracy factorization}
\end{subfigure}
\begin{subfigure}{0.24\textwidth}
\centering
\includegraphics[width=\textwidth]{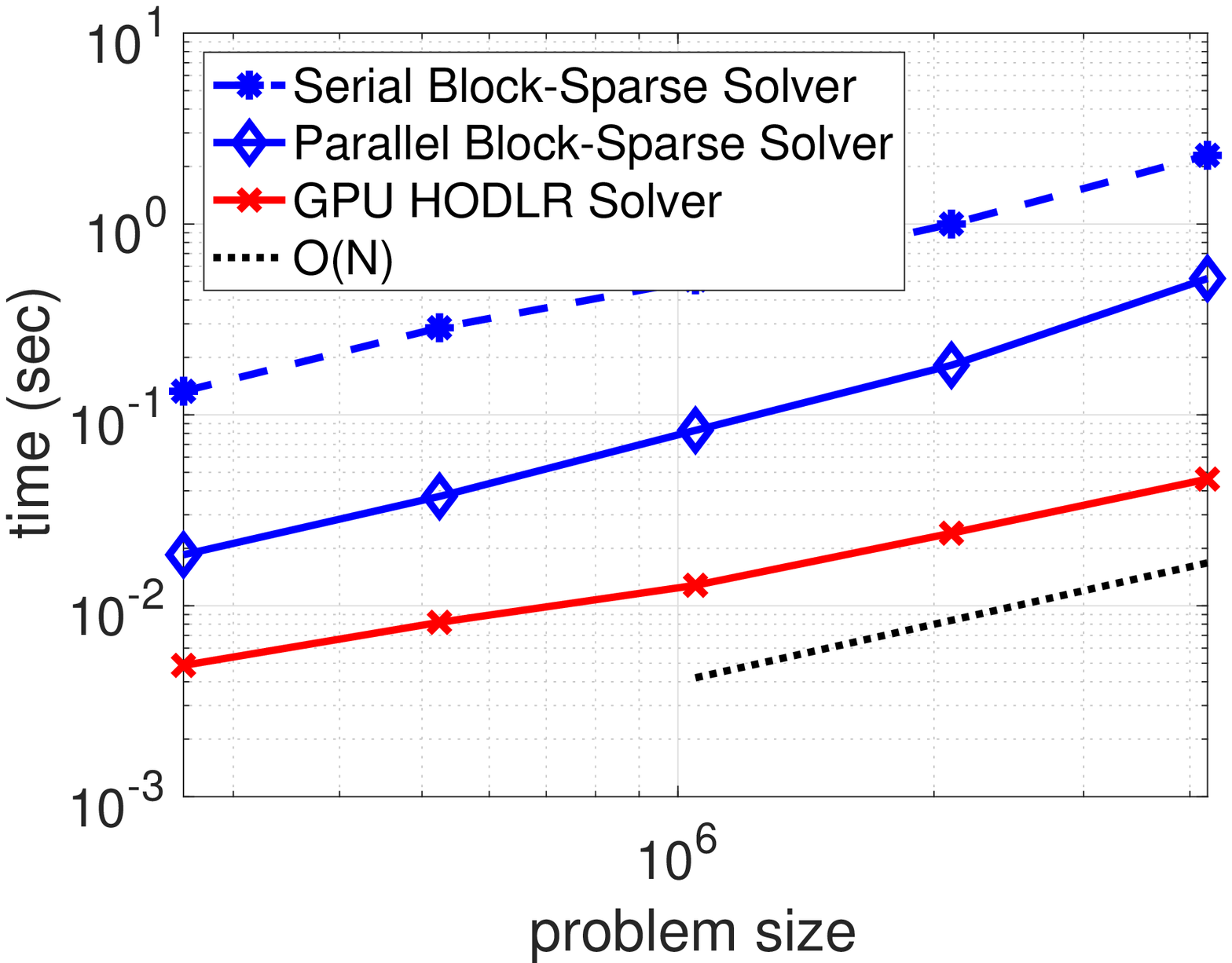}
\caption{High-accuracy solution}
\end{subfigure}
%
%
%
\begin{subfigure}{0.24\textwidth}
\centering
\includegraphics[width=\textwidth]{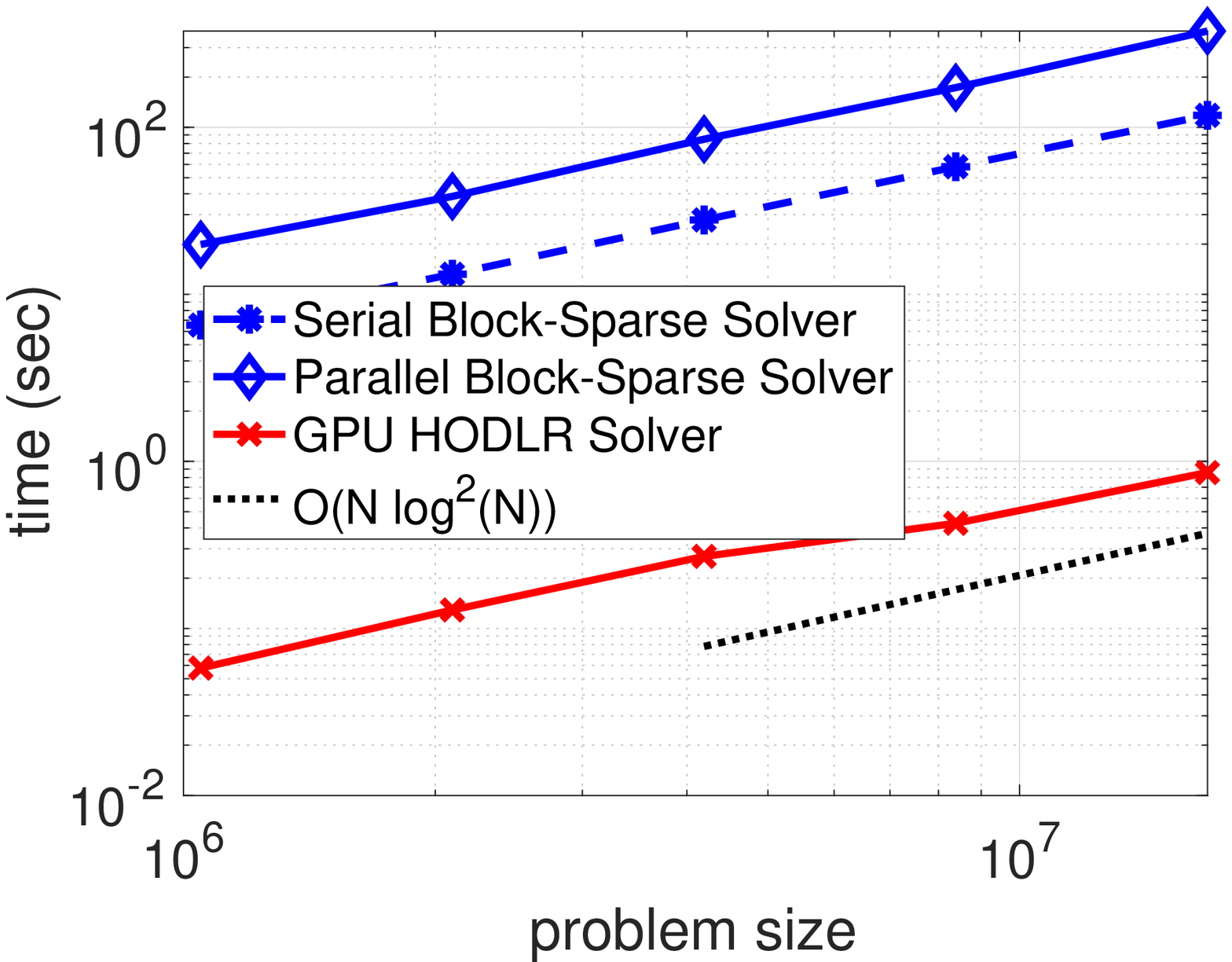}
\caption{Low-accuracy factorization}
\end{subfigure}
\begin{subfigure}{0.24\textwidth}
\centering
\includegraphics[width=\textwidth]{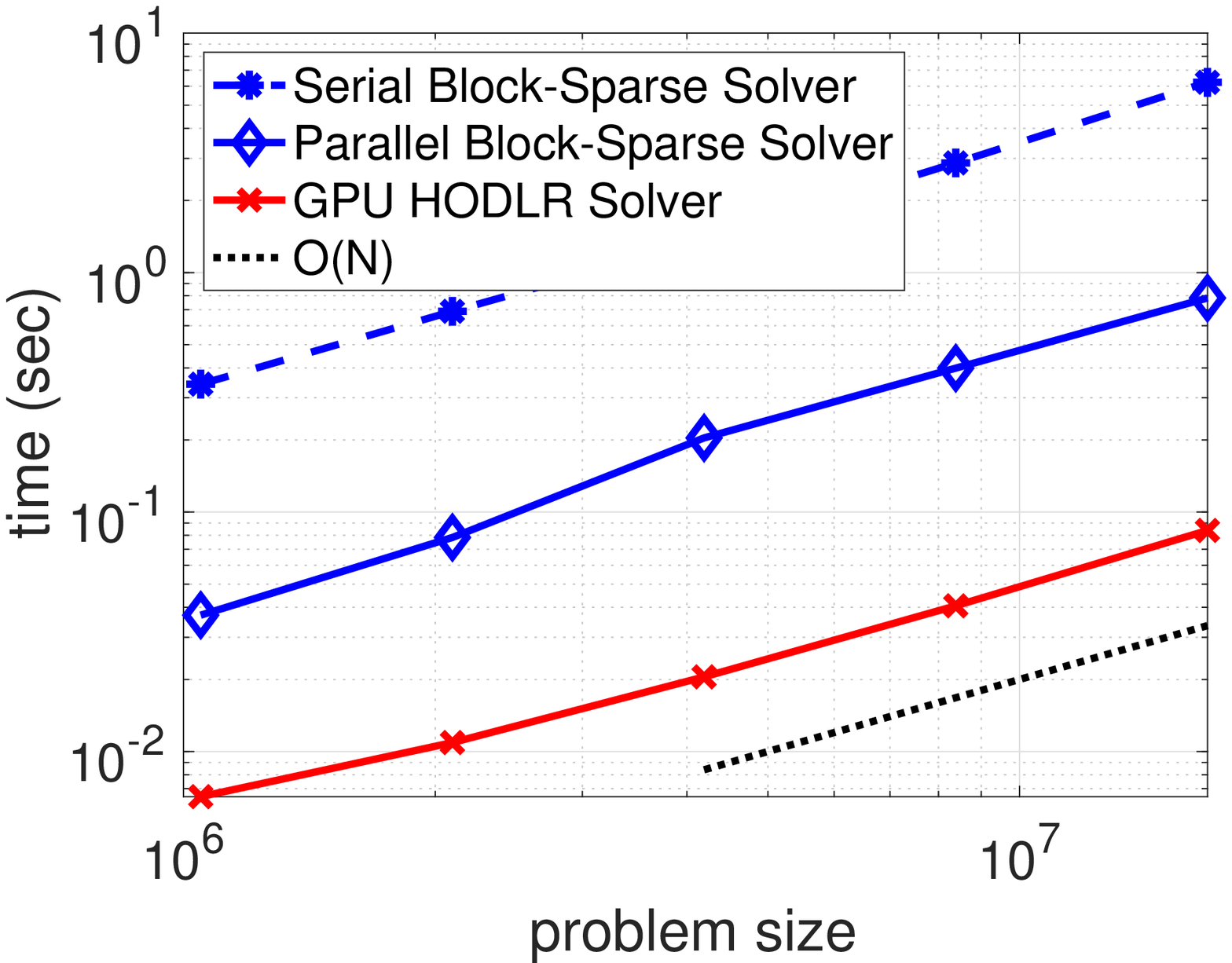}
\caption{Low-accuracy solution }
\end{subfigure}
\caption{Comparison between the block-sparse solver~\cite{ho2012fast} and our GPU solver for solving \cref{e:laplace_int} discretized with a 2nd-order quadrature.}
\label{f:bie_dr}
\end{figure}

\subsection{Helmholtz equation on an exterior domain} \label{s:helmholtz}

Consider the following {exterior} BVP that models, e.g., time-harmonic wave problems,
\begin{equation} \label{e:helmholtz}
\left\{
\begin{array}{cl}
- \Delta u(x) - \kappa^2 u(x) = 0 & \text{ in } \Omega, \\
u(x) = f(x) & \text{ on } \Gamma.
\end{array}
\right.
\end{equation}
Observe that \cref{e:helmholtz} is defined on an 
\emph{infinite} domain, as shown in \cref{f:contour}. To ensure a well-posed problem, we also require the radiation condition at infinity: for every unit vector $z$
\begin{equation} \label{e:helmholtz_inf}
\lim_{r \to \infty} \sqrt{r} \left( \frac{\partial u(rz)}{\partial r} - i \kappa u(rz) \right) = 0.
\end{equation}
Solving \cref{e:helmholtz}  by directly discretizing the PDE is quite challenging. Instead, we use a BIE formulation.
Define  the single- and double-layer kernels
\begin{align*}
s_{\kappa}(x,y) &= \phi_{\kappa}(x - y), \\
d_{\kappa}(x,y) &= n(y) \cdot \nabla_y \phi_{\kappa}(x - y),
\end{align*}
where $\phi_{\kappa}(x) = \frac{i}{4} H_0^{(1)}(\kappa |x|)$ is the fundamental solution of the Helmholtz operator and $H_0^{(1)}$ is the zeroth-order Hankel function of the first kind, and where $n(y)$ is the inward normal at $y\in\Gamma$. We reformulate \cref{e:helmholtz} as a second-kind Fredholm equation~\cite{kress1989linear,mclean2000strongly}
\begin{equation} \label{e:helmholtz_int}
\frac{1}{2} \sigma(x) + \int_{\Gamma} \left( d_{\kappa}(x,y) + i \eta s_{\kappa}(x,y) \right) \sigma(x) d s(y) = f(x),
\end{equation}
where $x \in \Gamma$ and $\eta$ is a parameter that is often chosen as $\eta = \pm \kappa$. The BIE is defined on a finite domain and automatically satisfies \cref{e:laplace_inf}. We chose {$\eta = \kappa = 100$} and discretized \cref{e:helmholtz_int} with the 6-th order quadrature proposed by Kapur and Rokhlin~\cite{kapur1997high}. The resulting linear system is notoriously difficult to solve iteratively.

{Again, we constructed a HODLR approximation using the proxy surface technique (see, e.g., \cite[Chapter~17]{martinsson2019fast}) on a CPU and copied the HODLR matrix to the GPU.}
 We applied our HODLR solver and the block-sparse solver, and the results are shown in \cref{t:bie_ht}. The factorization time and the solution time are plotted in \cref{f:bie_ht}, and  \cref{f:flops} shows the floating point operations per second (Flop/s).  Due to the oscillatory nature of the fundamental solution of the Helmholtz operator, the numerical ranks from low-rank compressions are higher than those in \cref{s:laplace} associated with the Laplace operator for the same accuracy. As a result, the costs for solving \cref{e:helmholtz_int}  are generally higher.  However, the observations are similar to before: (1) the costs of our GPU solver scaled nearly linearly; (2) our GPU solver achieved significant speedups over the parallel block-sparse solver (the factorization of the latter is faster than that of the sequential method); (3) we obtained a fast direct solver with highly accurate low-rank compressions as shown in \cref{t:bie_ht_acc10}. When relatively low-accuracy compressions were employed, we computed a reasonably accurate preconditioner much faster using much less memory.

\begin{figure}
\centering
\begin{subfigure}{0.24\textwidth}
\centering
\includegraphics[width=\textwidth]{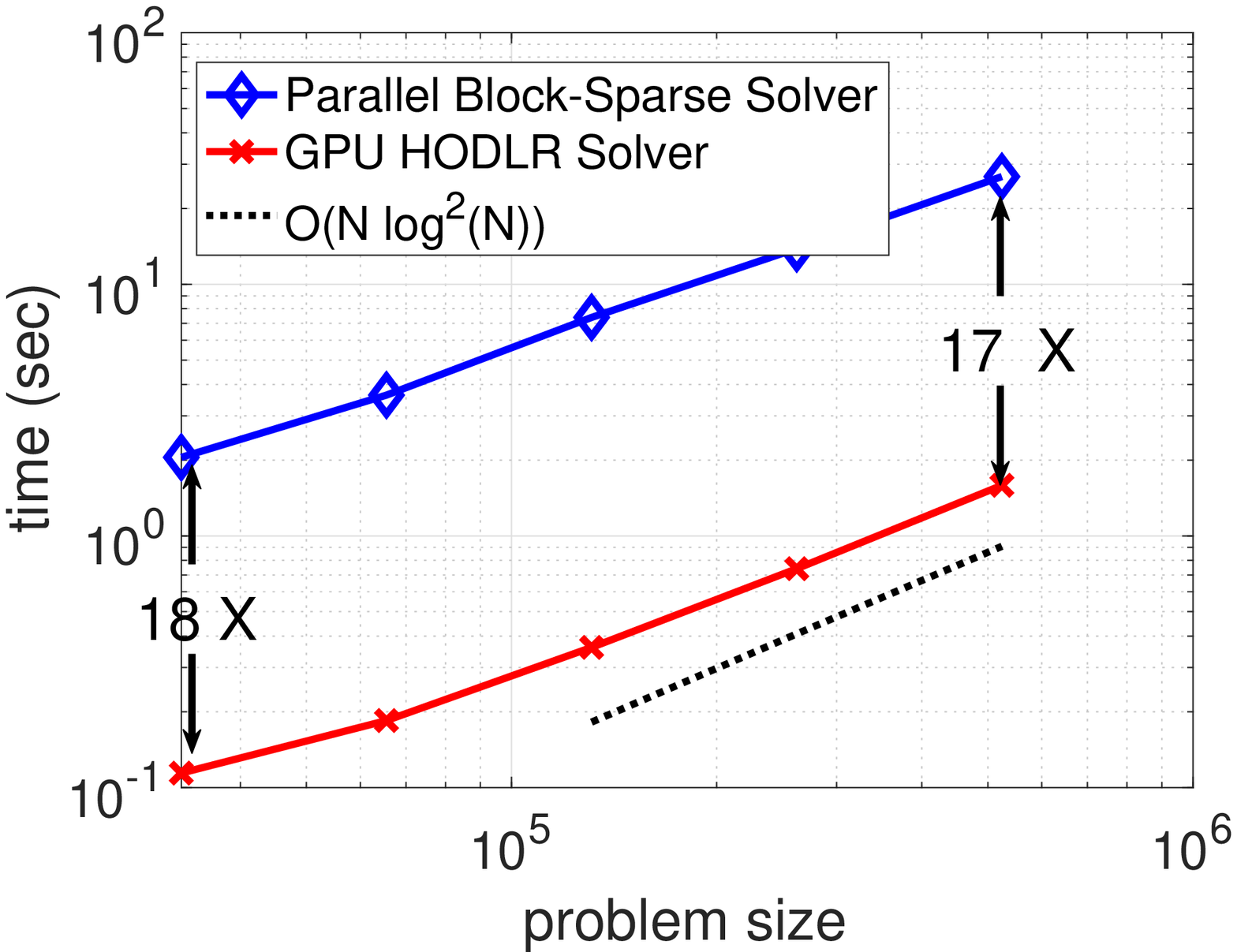}
\caption{High-accuracy factorization}
\end{subfigure}
\begin{subfigure}{0.24\textwidth}
\centering
\includegraphics[width=\textwidth]{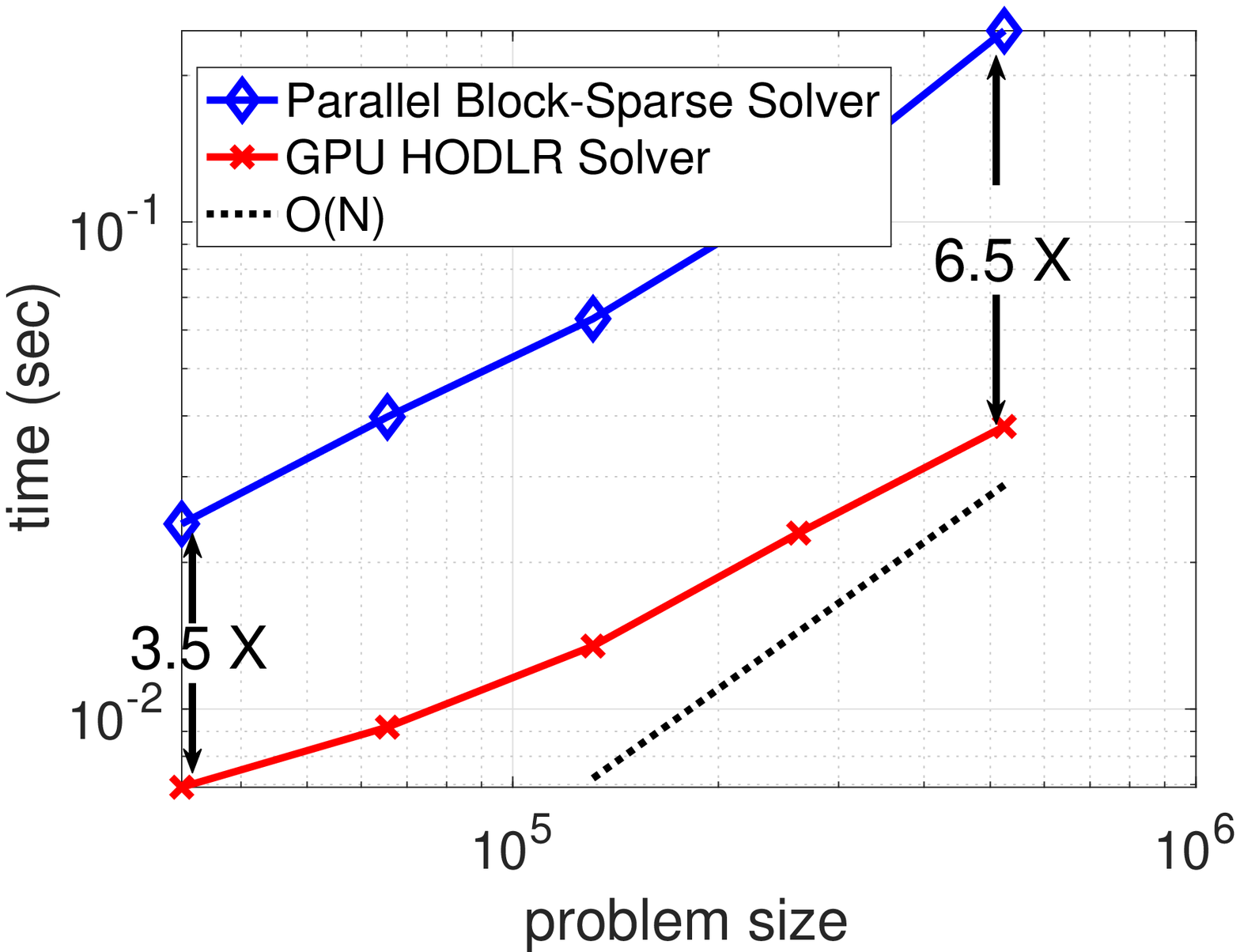}
\caption{High-accuracy solution}
\end{subfigure}
\begin{subfigure}{0.24\textwidth}
\centering
\includegraphics[width=\textwidth]{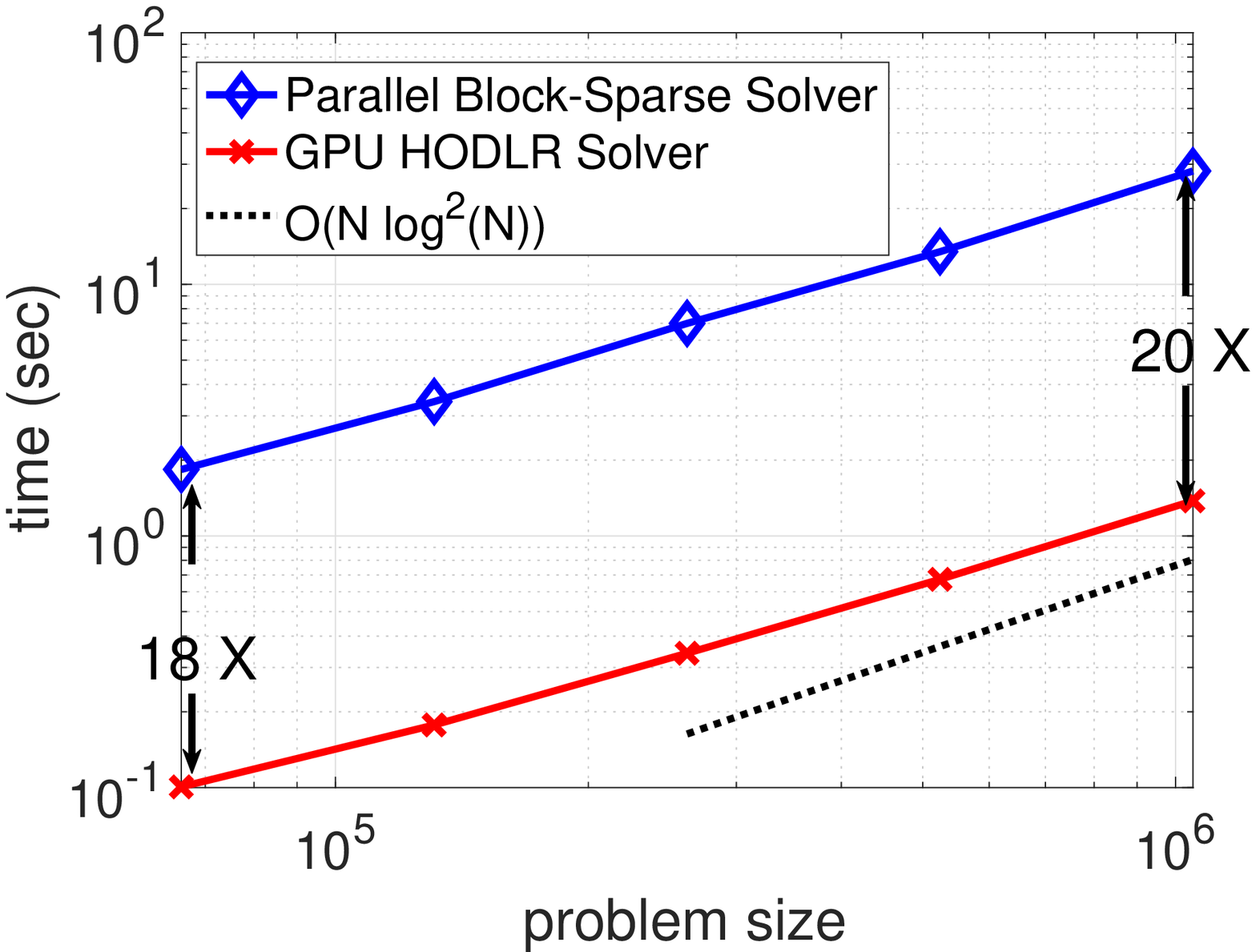}
\caption{Low-accuracy factorization}
\end{subfigure}
\begin{subfigure}{0.24\textwidth}
\centering
\includegraphics[width=\textwidth]{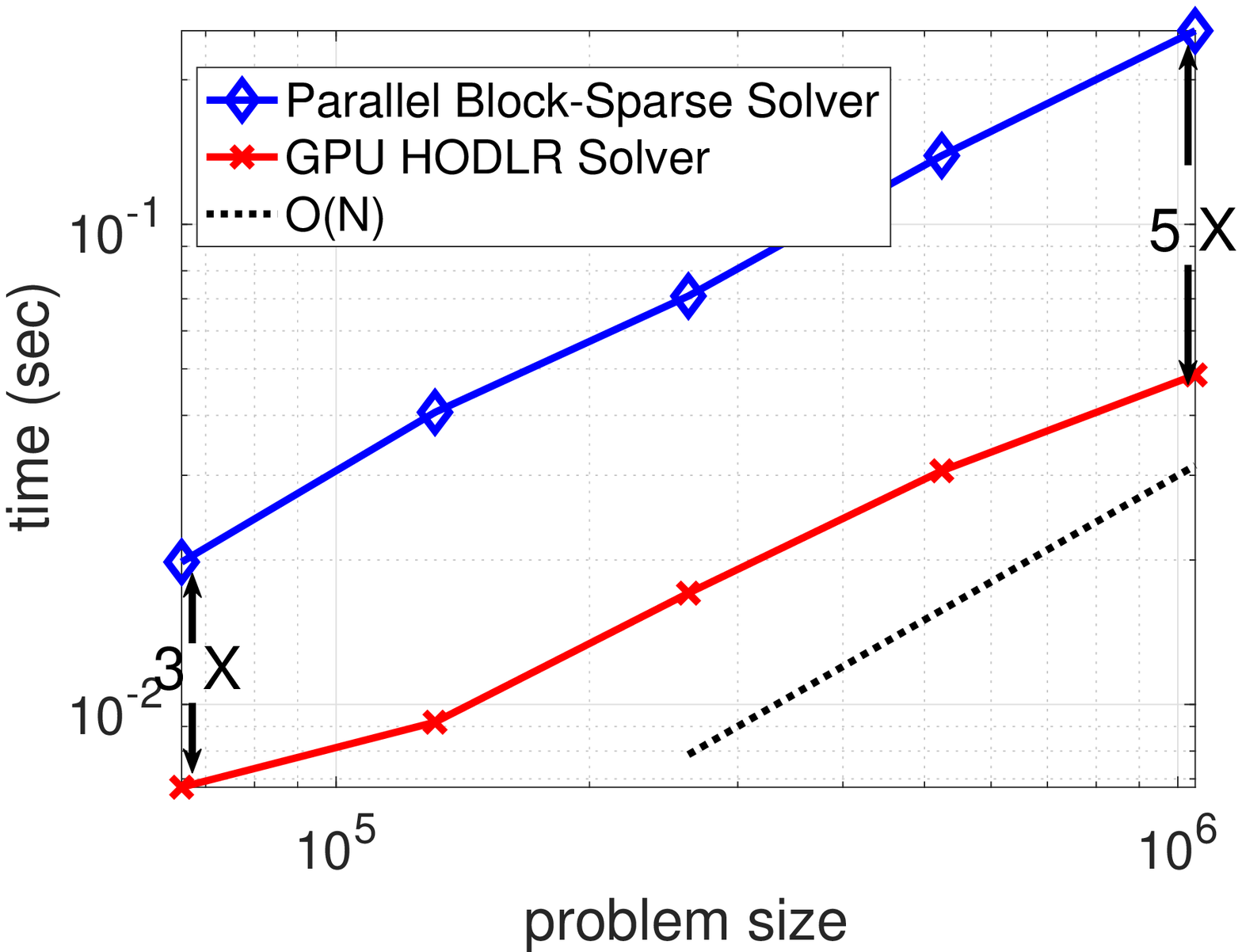}
\caption{Low-accuracy solution}
\end{subfigure}
\caption{Speedups for solving \cref{e:helmholtz_int} discretized with a 6-th order quadrature.}
\label{f:bie_ht}
\end{figure}

\begin{figure}
\centering
\begin{subfigure}{0.24\textwidth}
\centering
\includegraphics[width=\textwidth]{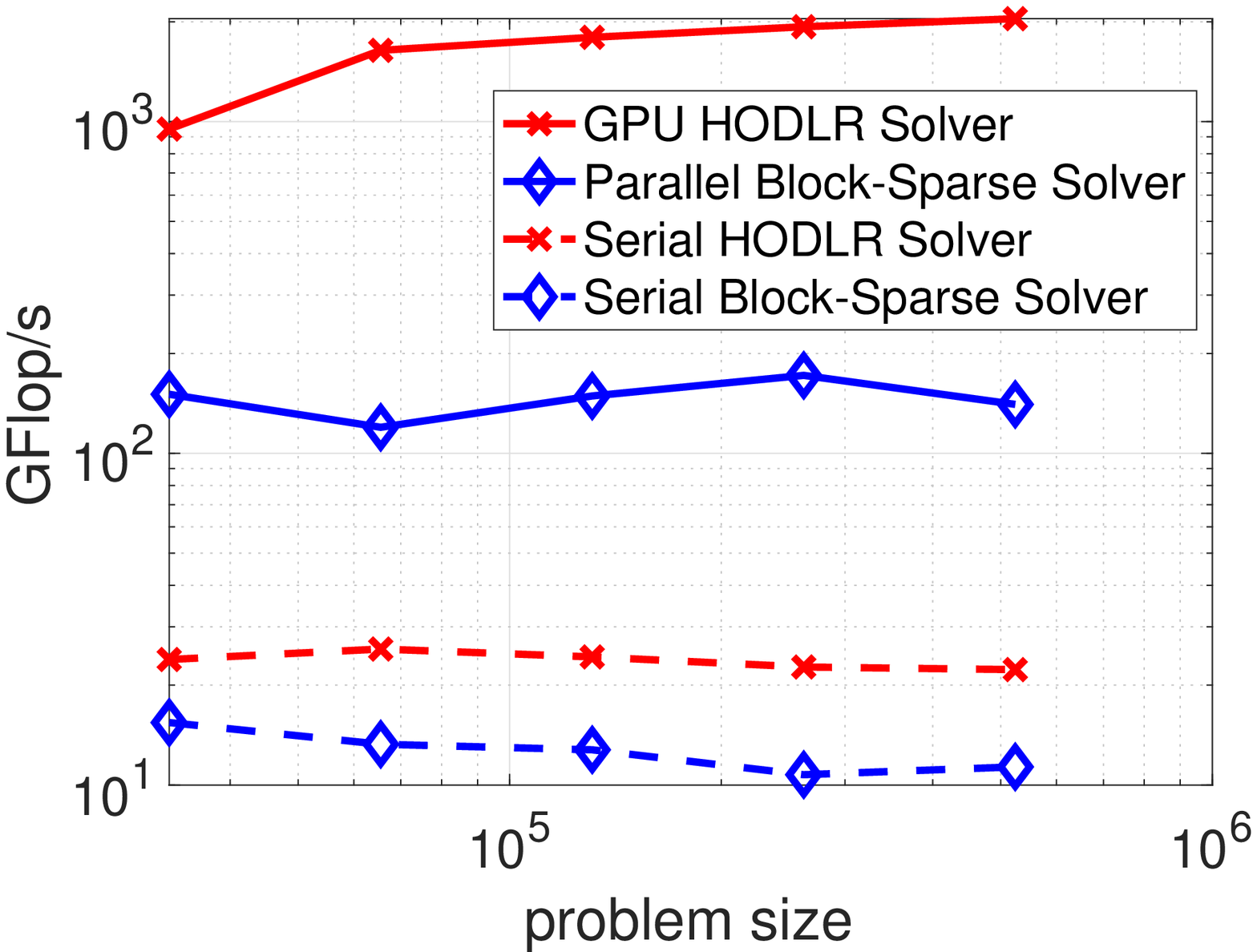}
\caption{High-accuracy factorization}
\label{f:a}
\end{subfigure}
\begin{subfigure}{0.24\textwidth}
\centering
\includegraphics[width=\textwidth]{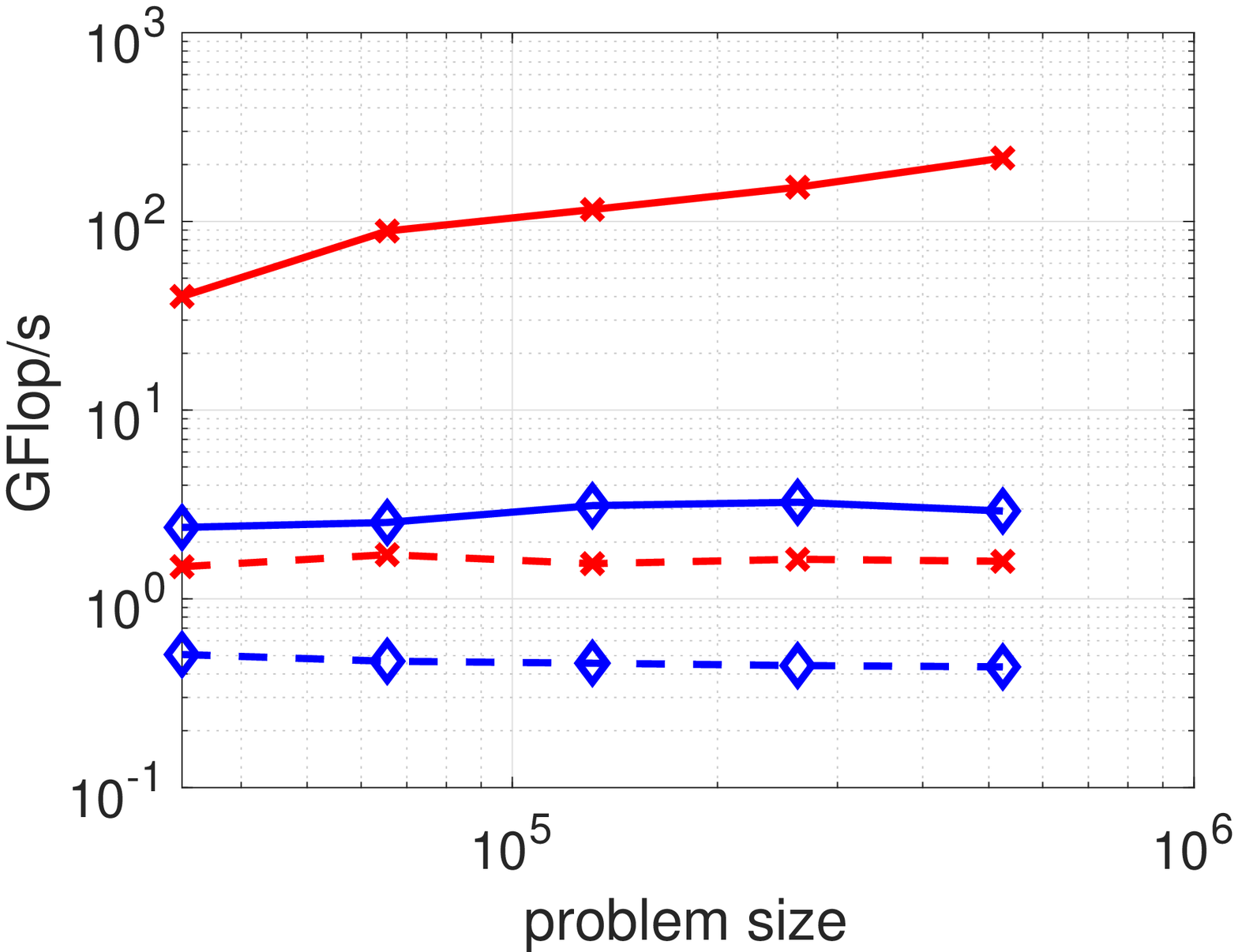}
\caption{High-accuracy solution}
\end{subfigure}
\caption{GFlop/s for solving \cref{e:helmholtz_int}.
For the block-sparse solver,  only the numerical factorization phase is considered in \cref{f:a}.
}
\label{f:flops}
\end{figure}

\begin{table*}
    \caption{\em Results for solving \cref{e:helmholtz_int} with {$\eta = \kappa = 100$}, discretized with a 6-th order quadrature.
    }
    \label{t:bie_ht}
    \begin{subtable}[h]{\textwidth}
    \centering
    \begin{tabular}{c|ccc|ccc|ccc|ccc|c} \toprule
	\multirow{2}{*}{$N$} & \multicolumn{3}{c|}{Serial HODLR Solver} & \multicolumn{3}{c|}{Serial Block-Sparse Solver} & \multicolumn{3}{c|}{Parallel Block-Sparse Solver} & \multicolumn{3}{c|}{GPU HODLR Solver} & \multirow{2}{*}{relres}  \\
	& $t_f$ & $t_s$ & mem & $t_f$ & $t_s$ & mem & $t_f$ & $t_s$ & mem & $t_f$ & $t_s$ & mem & \\ \midrule
	\rowcolor{Gray}
	$2^{15} = 32,768$ & 4.53e+0 & 1.92e-1 & 0.81 & 4.08e+0 & 8.28e-2 & 0.28 & 2.05e+0 & 2.40e-2 & 1.21 & 1.14e-1 & 6.91e-3 & 0.81 & 2.02e-9  \\
	$2^{16} = 65,536$ & 1.18e+1 & 4.86e-1 & 1.70 & 6.40e+0 & 1.51e-1 & 0.51 & 3.63e+0 & 3.98e-2 & 2.08 & 1.85e-1 & 9.18e-3 & 1.70 & 1.34e-9  \\
	\rowcolor{Gray}
	$2^{17} = 131,072$ & 2.66e+1 & 1.01e-0 & 3.58 & 1.10e+1 & 2.87e-1 & 0.96 & 7.39e+0 & 6.33e-2 & 3.97 & 3.61e-1 & 1.35e-2 & 3.58 & 1.67e-9  \\
	$2^{18} = 262,144$ & 6.31e+1 & 2.15e-0 & 7.48 & 1.99e+1 & 5.69e-1 & 1.84 & 1.39e+1 & 1.14e-1 & 7.55 & 7.42e-1 & 2.29e-2 & 7.48 & 7.23e-10  \\
	\rowcolor{Gray}
	$2^{19} = 524,288$ & 1.45e+2 & 5.22e-0 & 15.7 & 3.75e+1 & 1.12e-0 & 3.59 & 2.68e+1 & 2.47e-1 & 14.7 & 1.59e-0 & 3.80e-2 & 15.7 & 1.02e-9 \\
    \bottomrule
    \end{tabular}
    \caption{\em  High-accuracy fast direct solver}
    \label{t:bie_ht_acc10}
    \end{subtable} \\
    \begin{subtable}[h]{\textwidth}
    \centering
    \begin{tabular}{c|ccc|ccc|ccc|ccc|c} \toprule
	\multirow{2}{*}{$N$} & \multicolumn{3}{c|}{Serial HODLR Solver} & \multicolumn{3}{c|}{Serial Block-Sparse Solver} & \multicolumn{3}{c|}{Parallel Block-Sparse Solver} & \multicolumn{3}{c|}{GPU HODLR Solver} & \multirow{2}{*}{relres}   \\
	& $t_f$ & $t_s$ & mem & $t_f$ & $t_s$ & mem & $t_f$ & $t_s$ & mem & $t_f$ & $t_s$ & mem & \\ \midrule
	\rowcolor{Gray}
	$2^{15} = 32,768$ & 2.94e+0 & 1.57e-1 & 0.58 & 1.74e+0 & 4.31e-2 & 0.14 & 1.12e+0 & 1.73e-2 & 0.76 & 6.24e-2 & 4.44e-3 & 0.58 & 1.25e-4  \\
	$2^{16} = 65,536$ & 6.63e+0 & 3.24e-1& 1.17 & 2.61e+0 & 8.72e-2 & 0.25 & 1.84e+0 & 1.98e-2 & 1.24 & 1.00e-1 & 6.73e-3 & 1.17 & 1.98e-4  \\
	\rowcolor{Gray}
	$2^{17} = 131,072$ & 1.51e+1 & 6.71e-1 & 2.37 & 4.10e+0 & 1.37e-1 & 0.45 & 3.42e+0 & 4.06e-2 & 2.37 & 1.77e-1 & 9.19e-3 & 2.37 & 3.04e-4  \\
	$2^{18} = 262,144$ & 3.45e+1 & 1.51e-0 & 4.83 & 7.42e+0 & 2.69e-1 & 0.86 & 6.99e+0 & 7.09e-2 & 4.44 & 3.42e-1 & 1.71e-2 & 4.83 & 3.62e-4  \\
	\rowcolor{Gray}
	$2^{19} = 524,288$ & 7.76e+1 & 3.19e-0 & 9.83 & 1.41e+1 & 5.24e-1 & 1.68 & 1.35e+1 & 1.39e-1 & 8.94 & 6.72e-1 & 3.07e-2 & 9.83 & 3.99e-4  \\
	$2^{20} = 1,048,576$ & 1.75e+2 & 6.92e-0 & 19.8 & 2.72e+1 & 1.05e-0 & 3.32 & 2.82e+1 & 2.53e-1 & 17.5 & 1.38e-0 & 4.86e-2 & 19.8 & 7.21e-4  \\
    \bottomrule
    \end{tabular}
    \caption{\em Low-accuracy robust preconditioner}
    \label{t:bie_ht_acc4}
    \end{subtable}
\end{table*}

%
%


\section{Generalizations and Conclusions}

We introduce algorithms for factorizing a HODLR matrix and for solving linear systems using the factorization on a GPU. The algorithms are based on a new data structure of storing the HODLR matrix and leverage efficient batched dense linear algebra kernels. We benchmarked the performance of our codes on kernel matrices and discretized BIEs, for which we constructed fast solvers with different levels of accuracies. In our numerical experiments, our GPU solver achieved significant speedups against reference methods.

\appendix

\section*{{Off-diagonal ranks in HODLR approximations}} \label{s:rank}

Below are the ranks of off-diagonal blocks from level 1 to the leaf level:

\begin{itemize}

\item
\cref{t:hodlrlib}, $N=2^{21}$ (15 tree levels):
\begin{center}
56 54 45 52 44 30 41 38 38 25 33 24 22 19 18.
\end{center}


\item
\cref{t:bie_dr_acc10}, $N=2^{22}$ (16 tree levels):
\begin{center}
24    22    15    14    13    13    13    13    14    14    15    16    16    17    17    18.
\end{center}

\item
\cref{t:bie_dr_acc6},  $N=2^{24}$ (18 tree levels):
\begin{center}
  1     1     1     2     3     3     4     4     5     5     6     7     7     8     8     9    10    11.
\end{center}

\item
\cref{t:bie_ht_acc10},  $N=2^{19}$ (13 tree levels):
\begin{center}
225   134    97    69    54    46    41    39    37    35    33    31    29.
\end{center}

\item
\cref{t:bie_ht_acc4},  $N=2^{20}$ (14 tree levels):
\begin{center}
166 92 63 39 28 22 19 17 17 17 17 17 17 17.
\end{center}

\end{itemize}

\bibliographystyle{IEEEtran}
\bibliography{biblio}

\end{document}